\newtheoremstyle{plain}{3mm}{3mm}
{\slshape}{}{\bfseries}{.}{.5em}{}
\newtheoremstyle{definition}{3mm}{3mm}
{}{}{\bfseries}{.}{.5em}{}
\theoremstyle{plain}
\newcounter{maintheoremcounter}
\newtheorem{Maintheorem}[maintheoremcounter]{Theorem}	
\newtheorem{Theorem}{Theorem}[section]
\newtheorem*{theorem}{Theorem}
\newtheorem{Lemma}[Theorem]{Lemma}
\newtheorem{Proposition}[Theorem]{Proposition}
\newtheorem{Corollary}[Theorem]{Corollary}
\newtheorem*{TheoremSzemeredi}{Szemer{\'e}di's theorem}
\theoremstyle{definition}
\newtheorem{Definition}[Theorem]{Definition}
\newtheorem{Remark}[Theorem]{Remark}
\newcommand{\define}[1]{{\textsl{#1}}}
\newcommand{\N}{\mathbf{N}}
\newcommand{\Z}{\mathbf{Z}}
\newcommand{\R}{\mathbf{R}}
\newcommand{\Q}{\mathbf{Q}}
\renewcommand{\epsilon}{\varepsilon}
\renewcommand{\leq}{\leqslant}
\renewcommand{\geq}{\geqslant}
\renewcommand{\v}[1]{{\vec{#1}}}
\newcommand{\LipZ}{\textit{L}_\Z}			
\newcommand{\LipR}{\textit{L}_\R}
\newcommand{\XR}[5]{X_\R(#1,#2,#3,#4,#5)}
\newcommand{\XZ}[4]{X_\Z(#1,#2,#3,#4)}
\newcommand{\constantone}{\delta}
\newcommand{\constanttwo}{c_1}
\newcommand{\constantthree}{C_1}
\newcommand{\constantfour}{C_2}
\newcommand{\constantfive}{\eta}
\newcommand{\constantsix}{\eta} 	
\newcommand{\constantseven}{c_3} 	
\newcommand{\constanteight}{c_4} 	
\newcommand{\constantnine}{c_7} 	
\newcommand{\constantfunf}{c_5}
\newcommand{\constantsieben}{c_8}
\newcommand{\KZ}{{K_\Z}}
\newcommand{\KR}{{K_\R}}
\newcommand{\fcircell}{(f\circ\ell)}
\newcommand{\vell}{\v v_\ell}
\newcommand{\mell}{m_\ell}
\newcommand{\TheoremRef}[1]{Theorem~\hyperref[theorem:#1]
{\textcolor{blue}{\ref{theorem:#1}}}}
\newcommand{\CorollaryRef}[1]{Corollary~\hyperref[corollary:#1]
{\textcolor{blue}{\ref{corollary:#1}}}}
\newcommand{\LemmaRef}[1]{Lemma~\hyperref[lemma:#1]
{\textcolor{blue}{\ref{lemma:#1}}}}
\newcommand{\EquationRef}[1]{\hyperref[equation:#1]
{\textcolor{blue}{\eqref{equation:#1}}}}
\newcommand{\PropositionRef}[1]{Proposition~\hyperref[proposition:#1]
{\textcolor{blue}{\ref{proposition:#1}}}}
\newcommand{\RemarkRef}[1]{Remark~\hyperref[remark:#1]
{\textcolor{blue}{\ref{remark:#1}}}}
\newcommand{\DefinitionRef}[1]{Definition~\hyperref[definition:#1]
{\textcolor{blue}{\ref{definition:#1}}}}
\newcommand{\SectionRef}[1]{Section~\hyperref[section:#1]
{\textcolor{blue}{\ref{section:#1}}}}
\definecolor{dgreen}{RGB}{0,155,0}
\begin{document}
\title[Collinear points in discrete hypersurfaces]{Large subsets of discrete hypersurfaces in $\Z^d$ contain arbitrarily many collinear points}
\author{Joel Moreira \and Florian Karl Richter}
\address{\small Department of Mathematics, OSU, Columbus Ohio}
\email{\small moreira.6@osu.edu \and richter.109@osu.edu}
\begin{abstract}
In 1977 L.T. Ramsey showed that any sequence in $\Z^2$ with bounded gaps
contains arbitrarily many collinear points. Thereafter, in 1980, C. Pomerance
provided a density version of this result, relaxing the condition on the sequence
from having bounded gaps to having gaps bounded on average.

We give a higher dimensional generalization of these results.
Our main theorem is the following.
\begin{theorem}
Let $d\in\N$, let $f:\Z^d\to\Z^{d+1}$ be a Lipschitz map and
let $A\subset\Z^d$ have positive upper Banach density.
Then $f(A)$ contains arbitrarily many collinear points.
\end{theorem}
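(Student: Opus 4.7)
The plan is to reduce the problem to a higher-dimensional analogue of L.\ T.\ Ramsey's 1977 theorem on bounded-gaps sequences, with the multidimensional Szemer\'edi theorem and the Lipschitz condition as the bridge. The key observation is that a Lipschitz map sends any arithmetic progression in its domain to a bounded-gaps sequence in its codomain, so configurations in the image can be sought after we first impose arithmetic structure on the domain.

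I would first apply the multidimensional Szemer\'edi theorem (Furstenberg--Katznelson) to $A \subset \Z^d$: for every $N \in \N$, there exist $a \in \Z^d$ and $v \in \Z^d \setminus \{0\}$ with $\{a + j v : 0 \leq j < N\} \subset A$, and a quantitative form of the theorem controls $|v|$ in terms of $N$ and the upper Banach density $\delta$. Setting $b_j := f(a+jv) \in \Z^{d+1}$, the $K$-Lipschitz bound gives $|b_{j+1} - b_j| \leq K|v|$, so $(b_j)_{j=0}^{N-1}$ is a bounded-gaps sequence of length $N$ in $\Z^{d+1}$. The task then reduces to finding $\ell$ collinear points among the $b_j$ for any prescribed $\ell$, since such points automatically lie in $f(A)$.

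The main obstacle is the remaining step: establishing that every sufficiently long bounded-gaps sequence in $\Z^k$ (with $k = d+1$) contains $\ell$ collinear points. For $k = 2$ this is exactly L.\ T.\ Ramsey's classical theorem, but in higher codimension the set of primitive directions is much richer and the argument needs new input. A plausible strategy is induction on $k$, combined with a van der Waerden--type coloring of the consecutive differences, or alternatively a Furstenberg correspondence reducing to a multiple-recurrence problem for a suitable measure-preserving $\Z$-action. In either case the proof must be sufficiently quantitative that the length $N$ produced by Szemer\'edi in the first step can be made large enough relative to the gap bound $K|v|$ to extract the required $\ell$ collinear points.
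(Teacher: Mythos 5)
Your reduction to a bounded-gaps sequence in $\Z^{d+1}$ via an arithmetic progression in $A$ cannot work, because the lemma you need in the final step is false. J.\ L.\ Gerver and L.\ T.\ Ramsey (1979) constructed a sequence in $\Z^3$ with gaps bounded by $1$ in which no line contains more than $5^{11}$ of its points; so already for $d=2$ (codomain $\Z^3$) there is no $N$ large enough to force even $5^{11}+1$ collinear points among the $b_j$. Ramsey's theorem is special to $\Z^2$, and passing to a one-dimensional arithmetic progression in the domain throws away exactly the structure needed to beat the Gerver--Ramsey obstruction. There is also a secondary quantitative problem: the $|v|$ produced by the multidimensional Szemer\'edi theorem grows with $N$, so the gap bound $K|v|$ is not uniform over $N$, and even the true $d=1$ case would require a fixed gap bound (or, as Pomerance actually uses, gaps bounded on average).

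The paper's proof keeps the full $d$-dimensional domain in play rather than restricting to a line in $\Z^d$. After rescaling, it passes to a continuous Lipschitz limit $T:[-1,1]^d\to[-1,1]^{d+1}$ and a density function $\phi$, then applies Rademacher's theorem to find a point where $T$ is differentiable with nonzero Jacobian. Near such a point, the image of a thin cylinder in the domain is nearly contained in a thin cylinder in $\Z^{d+1}$ whose direction is a rational approximation (via Dirichlet) of the Jacobian direction; a volume/pigeonhole count then forces $k$ points of $f(A)$ onto one of the few lattice lines covering that image cylinder. This is a genuinely $d$-dimensional argument: the ``flatness'' guaranteed by Rademacher at a density point of $A$ is what replaces the false higher-dimensional Ramsey statement. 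If you want to salvage your idea, you would need to replace ``bounded-gaps sequence contains collinear points'' by a statement whose hypothesis remembers that the $b_j$ come from a Lipschitz map on a $d$-dimensional set, which is essentially what Lemma 3.4 (the cylinder lemma) in the paper does.
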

Note that Pomerance's theorem corresponds to the special case $d=1$.
In our proof,
we transfer the problem from a discrete to a continuous setting, allowing us to
take advantage of analytic and measure theoretic tools such as Rademacher's theorem.
\end{abstract}
\maketitle
\section{Introduction}\label{section:Introduction}

Ramsey theory deals with the problem of finding structured configurations in suitably large but possibly disordered sets.
The nature of the desired configurations can range from complete subgraphs of a graph to arithmetic progressions in $\Z$ to solutions of equations, such as $x+y=z$, in a countable commutative semigroup.
In this paper we deal with configurations consisting of finitely many collinear points in $\Z^d$.
The following theorem, which deals with this type of configurations, was obtained by L. T. Ramsey in 1977:

\begin{Theorem}[{\cite[Lemma 1]{Ramsey77}}]
\label{theorem:LTRamsey}
Let $M\in\N$ and suppose $\v u_1,\v u_2,\ldots\in\Z^2$ satisfies
\begin{equation}\label{equation:bdd-gaps}
\|\v u_{i+1}-\v u_i\|_2 \leq M\qquad\forall i\in\N.
\end{equation}
Then the sequence $\v u_1, \v u_2,\ldots$ contains arbitrarily many
collinear points. More precisely, for each $k\in\N$ there exists a set $X\subset\N$ with cardinality $|X|=k$ such that the set $\{\v u_i:i\in X\}$ is contained in a single line.
\end{Theorem}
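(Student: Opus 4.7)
My plan is to split into two cases based on how the sequence projects onto rational directions. An elementary one-dimensional fact---that any integer-valued sequence with bounded consecutive differences either attains some value infinitely often or diverges monotonely to $\pm\infty$---disposes of the easy case, while the hard case requires Diophantine approximation.

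First I would note that one may assume each lattice point is attained fewer than $k$ times by the sequence, since otherwise the $k$ indices where a common value is attained already give the desired $X$. In particular the image is infinite, so $\|\vec u_i\|$ is unbounded. For each primitive $\vec v \in \Z^2$, the integer-valued projection $p_i^{\vec v} := \vec v \cdot \vec u_i$ has consecutive differences bounded by $M\|\vec v\|$, and the one-dimensional fact above yields that either $p_i^{\vec v}$ attains some value $c$ infinitely often---in which case infinitely many $\vec u_i$ lie on the single line $\vec v \cdot \vec x = c$, finishing the proof---or else $p_i^{\vec v} \to \pm\infty$. If the former occurs for some primitive $\vec v$, we are done; otherwise every rational projection diverges.

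In this remaining case I would turn to Diophantine approximation. Since $\|\vec u_i\| \to \infty$, the unit vectors $\vec u_i/\|\vec u_i\|$ have an accumulation point $\vec\theta \in S^1$, and the divergence hypothesis forces $\vec\theta$ to have irrational slope. Given the target $k$, Dirichlet's theorem produces a primitive $\vec v \in \Z^2$ with $\|\vec v\|$ arbitrarily large and $|\vec v \cdot \vec\theta| \lesssim 1/\|\vec v\|$. The projection $p_i^{\vec v}$ is then of order $\|\vec u_i\|/\|\vec v\|$ plus a lower-order error, so for indices $i$ with $\|\vec u_i\| \lesssim \|\vec v\|$ the integer $p_i^{\vec v}$ is bounded by an absolute constant and takes only $O(1)$ distinct values. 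Since at least $\Omega(\|\vec v\|/M)$ indices satisfy this size bound, pigeonhole produces $k$ indices with a common value of $p_i^{\vec v}$, equivalently $k$ terms of the sequence on a common line.

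The main difficulty lies in this final step: the accumulation point $\vec\theta$ is in general only approached along a subsequence, so one must verify that the projection estimate applies to enough early indices simultaneously rather than only to some sparse subsequence. This is precisely where the bounded-gap hypothesis becomes essential---it quantitatively controls how fast $\vec u_i/\|\vec u_i\|$ can drift around $S^1$ relative to how fast $\|\vec u_i\|$ grows---and a careful accounting of this tradeoff will be the technical core of the proof.
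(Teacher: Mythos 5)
Your dichotomy (some rational projection attains a value $\geq k$ times, versus every rational projection diverges) is a reasonable starting point, and the easy case is handled correctly. However, there are two genuine problems in the hard case.

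First, the assertion that ``the divergence hypothesis forces $\vec\theta$ to have irrational slope'' is false. Take $\vec u_i=(i,\lfloor\sqrt{i}\rfloor)$: the gaps are bounded by $\sqrt2$, every rational projection $p_i^{\vec v}=a i + b\lfloor\sqrt{i}\rfloor$ tends to $\pm\infty$ (for any primitive $\vec v=(a,b)$), yet $\vec u_i/\|\vec u_i\|\to(1,0)$, a rational direction. So after discarding the easy case you may perfectly well be stuck with a rational limiting direction, where Dirichlet's theorem produces nothing interesting (the orthogonal vector is already integral and cannot be made arbitrarily long while primitive). A separate argument is needed for rational $\vec\theta$, and you have none.

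Second, and more seriously, the final pigeonhole step is not just a technicality left to the reader --- the required estimate is simply not available from what you have established. Writing $\vec u_i=\rho_i(\vec\theta+\vec e_i)$ with $\rho_i=\|\vec u_i\|$ and $\vec e_i\to0$, you need $|p_i^{\vec v}|\lesssim \rho_i|\vec v\cdot\vec\theta|+\rho_i\|\vec v\|\,\|\vec e_i\|$ to be $O(1)$ for all the $\gtrsim\|\vec v\|/M$ indices $i$ with $\rho_i\lesssim\|\vec v\|$. The first term is $O(1)$ by Dirichlet, but the second requires $\|\vec e_i\|\lesssim\|\vec v\|^{-2}$ at scale $\rho_i\approx\|\vec v\|$, and nothing you have said rules out the deviation decaying far more slowly (e.g.\ $\|\vec e_i\|\sim\rho_i^{-1/2}$). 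Moreover, you only claim $\vec\theta$ is an \emph{accumulation} point of $\vec u_i/\|\vec u_i\|$; without upgrading this to genuine convergence (which does hold in the hard case, though you do not prove it), the relevant indices might be a sparse subsequence, undermining the count $\Omega(\|\vec v\|/M)$. The acknowledged ``technical core'' is precisely where the argument breaks, and it is not clear it can be repaired along these lines.

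For comparison, the argument this paper uses (going back to Ramsey and Pomerance) does not pass through a projection dichotomy at all: one instead locates a long, thin cylinder containing many points of the sequence, then covers that cylinder with a controlled number of parallel rational lines (chosen via Dirichlet) and applies pigeonhole. This sidesteps the need to analyze the limiting direction of $\vec u_i/\|\vec u_i\|$ and its rate of convergence, which is exactly where your approach runs into trouble.
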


A sequence that satisfies \eqref{equation:bdd-gaps} is said to have
\define{bounded gaps}.
The above theorem can be interpreted as an analogue of
van der Waerden's theorem on arithmetic progressions \cite{vdWaerden27}, which, in one of its many forms, states
that any sequence $u_1, u_2,\ldots \in \Z$ with bounded gaps
contains arbitrarily long arithmetic progressions.
The fact that a sequence in $\Z^2$ with bounded gaps may not contain arbitrarily long arithmetic progressions is a non-trivial result first obtained by J. Justin \cite{Justin72}.
When properly interpreted, Justin's construction gives a sequence with bounded gaps in $\Z^2$ without a five term arithmetic progression.
This construction was later improved by F. M. Dekking, who built a sequence $\v u_1,\v u_2,\ldots\in\Z^2$ with $\|\v u_{i+1}-\v u_i\|_2\leq 1$ that does not contain a four term arithmetic progression \cite{Dekking79}.

It is natural to ask whether a result similar to \TheoremRef{LTRamsey} holds in higher dimensions.
It follows, as an easy corollary, that any sequence in $\Z^d$ with bounded gaps will contain arbitrarily many points in the same $(d-1)$-dimensional hyperplane.
To see this, simply project any given sequence in $\Z^d$
onto $\Z^2$ and take the preimage under this projection of the set of collinear points guaranteed by \TheoremRef{LTRamsey}.

One could naively attempt to extend \TheoremRef{LTRamsey} by asking whether a sequence with bounded gaps in higher dimensional lattices contains arbitrarily many collinear points.
However, J. L. Gerver and L. T. Ramsey constructed a sequence in $\Z^3$ with bounded gaps (actually with gaps bounded by $1$) with no more than $5^{11}$ points contained in a single line \cite{Gerver_Ramsey79}.
This example shows that one needs to change the framework to obtain non-trivial generalizations of \TheoremRef{LTRamsey} to higher dimensions.

A sequence in $\Z^2$ with bounded gaps can be viewed as a Lipschitz function $f:\Z\rightarrow\Z^2$. Using this language, \TheoremRef{LTRamsey} asserts that the image of any such Lipschitz function contains arbitrarily many collinear points.
In order to increase the dimension of the range from $\Z^2$ to a higher dimensional space $\Z^{d+1}$ one must also increase the dimension of the domain from $\Z$ to $\Z^d$ in order to get similar qualitative results.
We will prove the following:

\begin{Maintheorem}\label{theorem:maincoloring}
Let $d\in\N$ and let $f:\Z^d\to\Z^{d+1}$ be a Lipschitz map.
Then there are arbitrarily many collinear points in the image of $f$.
More precisely for any $k\in\N$ there exists a set $X\subset\Z^d$ with $|X|=k$ such that $f(X)$ is contained in a single line.
\end{Maintheorem}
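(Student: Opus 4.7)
My plan is to prove the theorem, and in fact its stronger density variant from the abstract, by induction on $d$. The base case $d=1$ is Pomerance's theorem, which handles Lipschitz maps $\Z \to \Z^2$ defined on subsets of positive upper Banach density. For the inductive step the strategy is to locate a rank-$d$ affine sublattice of $\Z^{d+1}$ that captures a positive-density portion of $f(A)$, thereby reducing the problem to a $(d-1)$-dimensional instance of the statement.

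More concretely, given a Lipschitz $f : \Z^d \to \Z^{d+1}$ and a set $A \subset \Z^d$ of positive upper Banach density, I would first extend $f$ to a Lipschitz $F : \R^d \to \R^{d+1}$ via the McShane--Kirszbraun extension. By Rademacher's theorem $F$ is differentiable almost everywhere, and at each differentiability point $x$ the Jacobian $DF(x) : \R^d \to \R^{d+1}$ has rank at most $d$, so its image admits a unit normal vector $\v n(x) \in S^d$; this produces a measurable unit normal field (defined up to sign) on a conull subset of $\R^d$.

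The next step is to exploit compactness of $S^d$: via a pigeonhole together with a Lusin-type argument I would locate a \emph{rational} unit vector $\v n_0$ and a bounded positive-measure set $B \subset \R^d$ on which $\v n(x)$ is close to $\v n_0$. Because $\nabla(\v n_0 \cdot F)(x) = DF(x)^T \v n_0$, the gradient of the scalar Lipschitz function $g := \v n_0 \cdot F$ is small throughout $B$, and a Poincar\'e-type estimate then yields that $g$ is close to a single constant $c$ on a large subset of $B$. Consequently a positive-density subset of $f(A)$ lies in a bounded slab around the affine hyperplane $\{y : \v n_0 \cdot y = c\}$; rationality of $\v n_0$ ensures that integer points in this slab decompose into finitely many translates of a fixed rank-$d$ sublattice $\Lambda \subset \Z^{d+1}$, and one further pigeonhole produces a single translate into which $f$ sends a positive-density subset $A' \subset A$. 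Identifying $\Lambda$ with $\Z^d$ via a linear Lipschitz isomorphism converts $f|_{A'}$ into a Lipschitz map $A' \to \Z^d$. A Fubini-type argument on $A'$ then yields an affine hyperplane $P \subset \Z^d$, a copy of $\Z^{d-1}$, such that $A' \cap P$ has positive upper Banach density in $P$, and the inductive hypothesis applied to $f|_{A' \cap P} : \Z^{d-1} \to \Z^d$ produces arbitrarily many collinear points in $f(A)$.

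The main obstacle I anticipate is the transition from ``$\nabla g$ small almost everywhere on $B$'' to ``$g$ nearly constant on a substantial subset of $B$'', since an arbitrary positive-measure set need not be connected and the usual Sobolev--Poincar\'e estimates require some regularity of the underlying domain. Handling this cleanly will likely require either a Lusin-type modification reducing $F$ to a $C^1$ map outside a small exceptional set, or a direct integration-along-paths argument on a carefully chosen subdomain of $B$ where a Poincar\'e inequality is available; carrying out this step while simultaneously preserving enough of the discrete set $A \cap \Z^d$ to retain positive upper Banach density at the end is, in my view, the technical heart of the argument.
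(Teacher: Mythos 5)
Your inductive scheme is a genuinely different route from the paper (which never inducts on $d$: it locates a single long thin cylinder in the range near a Rademacher differentiability point, covers the cylinder's image with few nearly-parallel lines via Dirichlet approximation, and pigeonholes), but the descent hides a fatal gap precisely at the ``Fubini-type argument.'' The claim that a set $A'\subset\Z^d$ of positive upper Banach density must meet some rank-$(d-1)$ affine sublattice $P$ in a set of positive upper Banach density inside $P$ is false. Enumerate the countably many rank-$(d-1)$ affine sublattices of $\Z^d$ as $P_1,P_2,\dots$, and build disjoint cubes $Q_k=\v{c}_k+[0,L_k)^d$ with $L_k\to\infty$, choosing $\v{c}_k$ greedily so that $Q_k$ avoids $P_1,\dots,P_k$ (at each stage one is dodging only finitely many slabs, so this is possible). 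Then $A'=\bigcup_k Q_k$ has upper Banach density $1$, yet each $P_j$ can only meet the finitely many cubes $Q_k$ with $k<j$, so $A'\cap P_j$ is finite and has density $0$ in $P_j$. Since at that point of your argument you retain no information about $A'$ beyond positivity of its Banach density, the inductive descent does not close.

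Two further remarks. First, you rightly flag the step from ``$\nabla g$ small on a positive-measure set $B$'' to ``$g$ nearly constant on a large chunk of $B$'' as the technical heart; but even granting it, the slab reduction yields a Lipschitz map from a positive-density $A'\subset\Z^d$ into a rank-$d$ lattice --- still a $d$-to-$d$ situation --- so \emph{all} the dimension drop must come from the hyperplane restriction, which is exactly the step that fails. Second, the paper avoids both obstacles at once: after passing to a Lipschitz limit $T$ of rescalings of $f$ and a weak $L^2$ limit $\phi$ of rescaled indicators of $A$, it chooses a \emph{single} point $\v x$ where $T$ is differentiable (Rademacher), $\phi(\v x)>0$ (Lebesgue differentiation), and the differential is nonzero; it then produces a direction $\v z$ with nonzero image under that differential along which thin cylinders about $[\v x,\v x+r\v z]$ carry a definite fraction of $\phi$-mass, transfers this to a discrete cylinder in $\Z^d$ whose $f$-image clusters near a line, and covers that image with a controlled number of lines chosen by Dirichlet's approximation theorem, so that one line must pick up $k$ points of $f(A)$. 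No Poincar\'e inequality, no positive-measure normal field, and no induction on $d$ are required.
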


Observe that \TheoremRef{LTRamsey} can be derived from \TheoremRef{maincoloring} by setting $d=1$.
One can intuitively interpret \TheoremRef{maincoloring} as stating that any discrete hypersurface in $\Z^{d+1}$ contains arbitrarily many collinear points; this interpretation becomes rigorous if one defines a discrete hypersurface as a set quasi-isometric\footnote{A map $f:X\to Y$ between metric spaces is a quasi-isometry if there exist $C,M\geq1$ such that $1/Md(x,y)-C<d\big(f(x),f(y)\big)<Md(x,y)+C$ and for every $y\in Y$ there exists $x\in X$ such that $d\big(f(x),y\big)<C$.} to $\Z^d$.

A density version of van der Waerden's theorem, known as
Szemer{\'e}di's theorem, was obtained in
\cite{Szemeredi75}.
\begin{TheoremSzemeredi}
Let $A\subset\Z$ have positive upper Banach density, i.e.,
$$d^*(A):=\limsup_{L\to\infty}~\sup\left\{\left.\frac{|A\cap[N,N+L]|}L\ \right|N\in\Z\right\}>0.$$
Then $A$ contains arbitrarily long arithmetic progressions.
\end{TheoremSzemeredi}

It is not hard to see that Szemer{\'e}di's theorem is equivalent to the statement that
any sequence $u_1, u_2,\ldots \in \Z$ with gaps bounded
on average, i.e.,
any sequence with
$$\frac1m\sum_{i=1}^m |u_{i+1}-u_i | \leq M$$
for infinitely many $m\in\N$, contains
arbitrarily long arithmetic progressions.
In 1978, in analogy with Szemer{\'e}di's theorem,
C. Pomerance presented a proof of the following density version of
\TheoremRef{LTRamsey}:
\begin{Theorem}[\cite{Pomerance80}]
\label{theorem:Pomerance}
Let $M\in\N$ and suppose the sequence $\v u_1,\v u_2,\ldots\in\Z^2$ satisfies
\begin{equation}
\label{equation:bdd-gaps-in-average}
\frac1m\sum_{i=1}^m\|\v u_{i+1}-\v u_i\|_2 \leq M
\end{equation}
for infinitely many $m\in\N$.
Then the sequence $\v u_1, \v u_2,\ldots$ contains arbitrarily many
collinear points.
\end{Theorem}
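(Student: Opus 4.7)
My plan is to derive Pomerance's theorem from the density version of Ramsey's theorem (Theorem \ref{theorem:LTRamsey}), which is the $d=1$ case of this paper's main density theorem announced in the abstract. The first step converts the bounded-average-gaps hypothesis into a positive-density hypothesis via an interpolation, after which the density theorem applies directly.

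Concretely, I would replace $\v u_1,\v u_2,\ldots$ by an auxiliary sequence $\v w_1,\v w_2,\ldots\in\Z^2$ obtained by interpolating the segment from $\v u_i$ to $\v u_{i+1}$ by a monotone axis-aligned lattice walk of length $\|\v u_{i+1}-\v u_i\|_1$. This yields $\|\v w_{j+1}-\v w_j\|_2\leq 1$ for every $j$, so the map $j\mapsto \v w_j$ is Lipschitz. Let $A\subset\N$ be the set of indices at which some original $\v u_i$ appears; these are $j(i)=1+\sum_{k<i}\|\v u_{k+1}-\v u_k\|_1$. Combining the bounded-average-gaps hypothesis with $\|\cdot\|_1\leq \sqrt{2}\|\cdot\|_2$ gives $j(m)\leq 1+\sqrt{2}Mm$ for infinitely many $m$, so $|A\cap[1,\lceil\sqrt{2}Mm\rceil]|\geq m$ along this subsequence, whence $A$ has upper Banach density at least $1/(M\sqrt{2})>0$.

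The second step is to invoke the density version of Theorem \ref{theorem:LTRamsey}, applied to the Lipschitz sequence $(\v w_j)_{j\in\N}$ (extended arbitrarily to $j\leq 0$ to fit the $\Z\to\Z^2$ framework of the main theorem) and the positive-density subset $A\subset\N$. It produces $k$ collinear points in $\{\v w_j:j\in A\}=\{\v u_i:i\in\N\}$ for every prescribed $k$, which is exactly the conclusion of Pomerance's theorem.

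The main obstacle is the density strengthening itself. It cannot be extracted from Theorem \ref{theorem:LTRamsey} by any direct pigeonhole reduction, because Ramsey's theorem provides no control over where the collinear points sit---it is satisfied trivially, for instance, by sequences that revisit a single lattice point infinitely often. Overcoming this obstacle is the analytic core of the paper: one transfers the problem from the discrete to a continuous setting in $\R^2$ and exploits the almost-everywhere differentiability of Lipschitz maps (Rademacher's theorem) to extract collinear configurations from the resulting tangent-line structure.
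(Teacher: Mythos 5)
Your proposal is correct and coincides with the argument given in Section~\ref{section:two} for the implication from the $d=1$ case of \TheoremRef{A} to \TheoremRef{Pomerance}: both interpolate consecutive $\v u_i$ by shortest $\|\cdot\|_1$ lattice walks to produce a Lipschitz map $\Z\to\Z^2$, take $A$ to be the set of indices at which the original points occur, use the bounded-average-gap hypothesis to lower-bound the upper Banach density of $A$, and then invoke the density theorem. Your closing paragraph also correctly identifies why this density strengthening cannot be obtained from \TheoremRef{LTRamsey} by a simple pigeonhole argument.
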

It is a corollary of Pomerance's theorem that if a sequence $u_1,u_2,\ldots\in\Z$ has gaps bounded on average, then the sequence defined by $\v u_i=(i,u_i)\in\Z^2$, $i\in\N$, contains arbitrarily many collinear points.

It turns out that an extension of Pomerance's theorem to higher dimensions along the lines of \TheoremRef{maincoloring}
holds as well.
Our main theorem is then the common generalization of Theorems \ref{theorem:maincoloring} and \ref{theorem:Pomerance}.

\begin{Maintheorem}\label{theorem:A}
Suppose $f:\Z^d\rightarrow \Z^{d+1}$ is a Lipschitz
map and $A\subset\Z^d$ has positive upper Banach density (defined in \eqref{equation:Banach-density}).
Then given any positive integer $k$, there exists $X\subset A$
with $|X|=k$ such that $f(X)$ is contained in a line.
\end{Maintheorem}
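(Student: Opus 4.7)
The plan is to transfer the problem to Euclidean space and exploit Rademacher's theorem to locally linearize $f$. I first extend $f\colon\Z^d\to\Z^{d+1}$ to a Lipschitz map $\tilde f\colon\R^d\to\R^{d+1}$ (by componentwise McShane extension), and thicken $A$ to $\hat A:=A+[0,1)^d\subset\R^d$, which has positive upper Banach density in the Lebesgue sense. By Rademacher's theorem $\tilde f$ is differentiable almost everywhere; by Lusin's theorem I may assume $D\tilde f$ is continuous on a large closed set $K$ that still carries positive density of $\hat A$.

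Combining the Lebesgue density theorem with a pigeonhole on the compact space of attainable derivatives (which has operator norm bounded by the Lipschitz constant), I locate a small ball $B(x_0,r)$ and a matrix $L:=D\tilde f(x_0)$ such that $A$ is effectively dense in $B(x_0,r)$ and $\tilde f$ is uniformly close on $B(x_0,r)$ to the affine map $y\mapsto\tilde f(x_0)+L(y-x_0)$. By the multidimensional Szemer\'edi theorem of Furstenberg and Katznelson, $A\cap B(x_0,r)$ contains arithmetic progressions $\{x_0+itu:0\le i\le k-1\}$ of length $k$, which under the linearization map to approximately collinear tuples in $\Z^{d+1}$, since affine maps send APs to APs.

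The main obstacle is bridging approximate with \emph{exact} integer collinearity, since Rademacher's theorem only guarantees errors of order $o(\|y-x_0\|)$ and the target lattice $\Z^{d+1}$ does not a priori round out such errors. My proposed remedy is to couple the analytic step with a discrete pigeonhole on difference vectors: for each $u\in\Z^d$ the integer-valued map $x\mapsto f(x+u)-f(x)$ is bounded in norm by $\|u\|$ times the Lipschitz constant, hence takes only finitely many values, and so some level set $B_{u,c}:=\{x\in A:x+u\in A,\ f(x+u)-f(x)=c\}$ has positive upper Banach density. The crucial and most delicate step---and where the analytic linearization from Rademacher truly pays off---is to orchestrate the choice of $u$ and a recurrence argument on $A$ so that an entire AP of length $k$ with common difference $u$ sits inside such a level set; the images then form a genuine arithmetic progression in $\Z^{d+1}$, i.e.\ exactly collinear points. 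I expect the fact that generic common differences in Szemer\'edi-type results are only determined up to a scaling to be the principal technical hurdle, and to be handled by using the approximate affineness on $B(x_0,r)$ to replace $u$ by a suitable dilate on which both density and the integer difference can be controlled simultaneously.
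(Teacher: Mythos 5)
Your analytic preamble --- extending $f$ by McShane, invoking Rademacher and the Lebesgue density theorem to find a point where $\tilde f$ is approximately affine and $\hat A$ has density close to $1$ --- is genuinely in the spirit of the paper. The combinatorial half, however, is aimed at a target that is strictly stronger than the theorem and is in fact unattainable. An AP $x, x+u, \dots, x+(k-1)u$ sitting inside a level set $B_{u,c}$ forces $f(x+iu)=f(x)+ic$, i.e.\ a $k$-term arithmetic progression in the image $f(A)\subset\Z^{d+1}$. But Dekking's construction, already cited in the Introduction, produces a Lipschitz $f\colon\Z\to\Z^2$ with Lipschitz constant $1$ whose range contains no non-degenerate four-term AP; taking $A=\N$ (upper Banach density $1$), the hypotheses of \TheoremRef{A} are met, yet no $X$ of size $4$ can map to an AP, and the degenerate fallback $c=0$ would require four elements of $A$ with the same $f$-value, which the hypotheses do not provide either. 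So no argument that delivers equally spaced image points can prove the theorem for $k\geq4$: collinearity is essentially weaker than being an AP, and that weakening is unavoidable. There is also a secondary gap in the same step: even granting that $B_{u,c}$ has positive density for some $c$, Furstenberg--Katznelson gives a $k$-AP in $B_{u,c}$ with \emph{some} common difference that you cannot force to equal $u$ (a positive-density set can avoid all APs with a prescribed difference), and replacing $u$ by a dilate changes the set $B_{u,c}$ itself, so the self-referential requirement is not repaired.

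The paper sidesteps both problems by never aiming for equal spacing. Rademacher is used to find a density point of $A$ at which the blow-up limit $T$ of $f$ has a \emph{non-zero} derivative; this yields a long thin cylinder in $\Z^d$ carrying a $\delta$-fraction of $A$, whose $f$-image lies in a thin tube pointing in a fixed direction $\v w\in S^d$. That tube is then covered by relatively few lattice lines parallel to a Dirichlet rational approximation of $\v w$, and a pigeonhole argument shows one of these lines meets the image of $A\cap\KZ$ in at least $k$ points. Those $k$ collinear points are in general unequally spaced along the line, which is precisely the freedom Dekking's example shows is necessary, and no Szemer\'edi-type density theorem is used anywhere.
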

Although not apparent at first, \TheoremRef{Pomerance} and the special case $d=1$ of \TheoremRef{A} are equivalent. We give a proof of this fact in \SectionRef{two}.
An intuitive interpretation of \TheoremRef{A} is that large subsets of discrete hypersurfaces contain arbitrarily many collinear points.

The paper is organized as follows: In \SectionRef{two} we explore some equivalent and related statements to our main theorem. In \SectionRef{outline} we outline the proof and state our main technical result, which is \LemmaRef{XZ}. In \SectionRef{ContinuousVersion} we prove \LemmaRef{XZ} by reducing it to a statement about Lipschitz functions on $\R^n$. Finally, \SectionRef{Conclusion} finishes the proof of \TheoremRef{A}.

\paragraph{\textbf{Acknowledgements}}
The authors wish to thank Vitaly Bergelson for helpful comments and remarks, as well as
the anonymous referees for their many pertinent suggestions.

\section{Equivalent Formulations and Corollaries of the Main Theorems}\label{section:two}

For the remainder of this paper we fix a dimension $d\in\N$.
For $p\in \{1,2\}$ we define
$$\|\v{x}-\v{y}\|_p^p= \sum_{i=1}^d |x_i-y_i|^p.$$
The \define{upper Banach density} $d^\ast$ of a set
$A\subset \Z^d$
is defined as:
\begin{equation}\label{equation:Banach-density}
d^\ast(A):=
\limsup_{L\to\infty}~\sup\left\{\left.\frac{\big|A\cap\prod[N_i,N_i+L]\big|}{L^d}\ \right|\ N_1,\dots,N_d\in\Z\right\}.
\end{equation}
Whenever $f$ is a function and $X$ is a subset of its domain, we denote by $f(X)$ the set $\{f(x):x\in X\}$.
For $a\in\N$ we denote by $[a]$ the set $\{1,\dots,a\}$.
For a finite set $X$ we let $|X|$ be its cardinality.
For $x\in\R$ let $\lfloor x\rfloor\in\Z$ be defined as the largest integer no bigger than $x$ and for $\v x=(x_1,\dots,x_d)\in\R^d$ let $\lfloor\v x\rfloor$
be defined as the vector
$(\lfloor x_1\rfloor,\dots,\lfloor x_d\rfloor)\in\Z^d$.
We denote by $S^d\subset\R^{d+1}$ the unit sphere, $S^d=\{\v x\in\R^{d+1}:\|x\|_2=1\}$.

Given $d,h\in\N$, $M>0$ and a set $Z\subset\R^d$, a function $f:Z\to\R^h$ is \define{Lipschitz} with \define{Lipschitz constant $M$} if $\|f(\v x)-f(\v y)\|_2\leq M\|\v x-\v y\|_2$ for all $\v x,\v y\in Z$.

First let us formulate a seemingly more general, but, in fact, equivalent version of \TheoremRef{A}.
\begin{Theorem}\label{theorem:B}
Let $d,h\in\N$.
Suppose $f:\Z^d\rightarrow \Z^{d+h}$ is a Lipschitz
map and $A\subset\Z^d$ has positive upper Banach density.
Then given any positive integer $k$, there exists $X\subset A$
with cardinality $|X|=k$ such that $f(X)$ is contained in a $h$-dimensional
hyperplane of $\Z^{d+h}$.
\end{Theorem}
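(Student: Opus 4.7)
The plan is to derive Theorem~\ref{theorem:B} from Theorem~\ref{theorem:A} by a straightforward reduction via coordinate projection, so there is no genuine obstacle: the analytic work has already been done in Theorem~\ref{theorem:A}, and what remains is essentially a dimension count.

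First, I would fix the projection $\pi:\Z^{d+h}\to\Z^{d+1}$ onto the first $d+1$ coordinates and set $g:=\pi\circ f$. Since $\pi$ is $1$-Lipschitz with respect to the Euclidean norms on $\R^{d+h}$ and $\R^{d+1}$, the composition $g$ is again a Lipschitz map from $\Z^d$ to $\Z^{d+1}$ (with Lipschitz constant no larger than that of $f$). The hypothesis on $A$ is unchanged, so Theorem~\ref{theorem:A} applies to the pair $(g,A)$ and produces, for any prescribed $k\in\N$, a subset $X\subset A$ with $|X|=k$ whose image $g(X)=\pi(f(X))$ lies on a single line $\ell\subset\R^{d+1}$.

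It then remains to observe that $\pi^{-1}(\ell)\subset\R^{d+h}$ is an $h$-dimensional affine subspace. Indeed, the kernel of the $\R$-linear extension of $\pi$ to $\R^{d+h}\to\R^{d+1}$ is a linear subspace of dimension $h-1$, and $\pi^{-1}(\ell)$ is the Minkowski sum of any single lift of $\ell$ with this kernel, so it has dimension $1+(h-1)=h$. Since $f(X)\subset\pi^{-1}(\ell)$, the set $X$ witnesses the conclusion of Theorem~\ref{theorem:B}.

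For completeness, I would also record the converse direction: setting $h=1$ in Theorem~\ref{theorem:B} recovers Theorem~\ref{theorem:A} verbatim, which justifies the paper's claim that the two formulations are equivalent. No further ingredients, density arguments, or Lipschitz machinery are needed for this equivalence—the role of Theorem~\ref{theorem:B} is purely to advertise a stronger-looking statement at no additional cost.
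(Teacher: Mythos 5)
Your proposal matches the paper's argument exactly: project onto $\Z^{d+1}$ via $\pi$, apply Theorem~\ref{theorem:A} to $\pi\circ f$, and note that the preimage under $\pi$ of a line is an $h$-dimensional affine subspace. The dimension count ($\ker\pi$ has dimension $h-1$, plus $1$ for the line) is correct, and the observation that $h=1$ recovers Theorem~\ref{theorem:A} is also what the paper records.
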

When $h=1$, \TheoremRef{B} reduces to \TheoremRef{A}.
To deduce \TheoremRef{B} from \TheoremRef{A}, compose $f$ with the projection $\pi:\Z^{d+h}\to\Z^{d+1}$, find a line in $\Z^{d+1}$ which contains $\pi(f(X))$ and notice that the pre-image of a line under $\pi$ is an $h$-dimensional affine subspace in $\Z^{d+h}$.

As is usual with Ramsey theory results, there is an equivalent formulation of \TheoremRef{A} in finitistic terms:
\begin{Theorem}\label{theorem:mainfinitistic}
  Let $d,k\in\N$ and let $\delta,M>0$.
  There exists $L=L(d,k,\delta,M)\in\N$ such that for any Lipschitz function $f:\Z^d\to\Z^{d+1}$ with Lipschitz constant $M$ and any $A\subset[L]^d$ with cardinality $|A|>\delta L^d$ one can find a subset $X\subset A$ with $|X|=k$ such that $f(X)$ is contained in a line.
\end{Theorem}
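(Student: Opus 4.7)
My plan is to derive this finitistic statement from \TheoremRef{A} by a standard compactness/correspondence argument. I would argue by contradiction: suppose the statement fails for some fixed $d,k,\delta,M$, so that for each $L\in\N$ there is a Lipschitz map $f_L:\Z^d\to\Z^{d+1}$ with constant $M$ and a set $A_L\subset[L]^d$ with $|A_L|>\delta L^d$ such that no $k$-subset of $A_L$ has $f_L$-image on a line. The aim is to distill from this sequence a single pair $(f,A)$ with $f$ Lipschitz of constant $M$ and $A\subset\Z^d$ of positive upper Banach density admitting no $k$-collinear $f$-image, contradicting \TheoremRef{A}.

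The key device is to average over random translations. For each $L$, sample $\v v$ uniformly from $[L]^d$, set $\tilde f_L^{\v v}(\v x):=f_L(\v x+\v v)-f_L(\v v)$ and $\tilde A_L^{\v v}:=A_L-\v v$, and let $\mu_L$ be the induced probability measure on the compact metrizable space $\mathcal X$ consisting of pairs $(f,A)$ with $f:\Z^d\to\Z^{d+1}$ an $M$-Lipschitz map satisfying $f(\v 0)=\v 0$ and $A\subset\Z^d$, topologized by pointwise convergence (compactness follows from Tychonoff, since the Lipschitz condition confines each $f(\v x)$ to a finite subset of $\Z^{d+1}$). Extract a weak-$*$ accumulation point $\mu$ of $\{\mu_{L_j}\}$ along some subsequence $L_j\to\infty$.

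Next I would verify three properties of $\mu$: (i) $\mu$ is invariant under the diagonal $\Z^d$-translation action $T_{\v w}(f,A):=(f(\cdot+\v w)-f(\v w),\,A-\v w)$, because the total variation distance between $\mu_L$ and $\mu_L\circ T_{\v w}^{-1}$ is at most $|[L]^d\triangle(\v w+[L]^d)|/L^d=O(\|\v w\|/L)$ and vanishes as $L\to\infty$; (ii) $\mu[\v 0\in A]\geq\delta$, since under $\mu_L$ this probability is exactly $|A_L|/L^d>\delta$; (iii) for every fixed tuple $(\v x_1,\ldots,\v x_k)$ of distinct points in $\Z^d$, the event ``$\{\v x_i\}\subset A$ and $\{f(\v x_i)\}$ collinear'' is clopen in $\mathcal X$ and has $\mu_L$-probability zero (any such configuration, translated by $\v v$, would violate the hypothesis on $(f_L,A_L)$), hence has $\mu$-probability zero; a countable union over tuples shows that $\mu$-a.e.\ pair $(f,A)$ contains no $k$-subset of $A$ whose $f$-image lies on a line.

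The main obstacle is to extract from $\mu$ a single pair $(f,A)$ with $d^*(A)>0$, which I would accomplish via the Birkhoff pointwise ergodic theorem for the $\Z^d$-action. That theorem guarantees that for $\mu$-a.e.\ $(f,A)$ the limit $\lim_{L\to\infty}|A\cap[L]^d|/L^d$ exists and equals $E_\mu[1_{\{\v 0\in A\}}\mid\mathcal I](f,A)$, where $\mathcal I$ is the invariant $\sigma$-algebra. Its $\mu$-mean is $\mu[\v 0\in A]\geq\delta$, so on a set of positive $\mu$-measure the limit density is at least $\delta$; in particular, $d^*(A)\geq\delta$ there. Intersecting with the full-measure set from (iii) produces a pair $(f,A)$ satisfying the hypotheses but violating the conclusion of \TheoremRef{A}, a contradiction.
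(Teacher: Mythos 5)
Your proof is correct, but it takes a genuinely different route from the paper. The paper derives Theorem~\ref{theorem:mainfinitistic} from Theorem~\ref{theorem:A} by a direct, elementary construction: assuming the finitistic statement fails, it places the finite counterexamples $(f_L,A_L)$ in pairwise far-apart blocks $\v M_L+[L]^d$ of $\Z^d$, and translates the images by carefully chosen vectors $\v N_L$ so that no line can pass through points belonging to two different blocks; gluing the $f_L$'s into a single Lipschitz $g$ then produces a counterexample to Theorem~\ref{theorem:A}. Your argument instead runs a Furstenberg-correspondence-style compactness argument: average the finite counterexamples over random translations, pass to a weak-$*$ limit measure $\mu$ on the compact space of normalized $M$-Lipschitz pairs $(f,A)$, observe that $\mu$ is $\Z^d$-invariant with $\mu[\v 0\in A]\geq\delta$ and assigns zero mass to each of the countably many ``$k$-collinear configuration'' events, and then extract a single pair with $d^*(A)\geq\delta$ and no $k$-collinear image via the pointwise $\Z^d$ ergodic theorem. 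Both approaches are sound. Your version trades the paper's combinatorial bookkeeping (choosing $\v N_L$ and $\v M_L$ to rule out accidental collinearities across blocks) for the machinery of invariant measures and Birkhoff's theorem; it is conceptually cleaner and more automatic, at the price of invoking nontrivial ergodic theory where the paper uses nothing beyond elementary reasoning. Two small points worth making explicit if you were to write this out in full: the events in (iii) are clopen precisely because the normalization $f(\v 0)=\v 0$ plus the Lipschitz bound confine each coordinate $f(\v x_i)$ to a finite set, so collinearity becomes a finite Boolean condition on finitely many finite-valued coordinates; and in deducing $d^*(A)\geq\delta$ on a positive-measure set, one should note that the a.e.\ limit $\bar g$ of the ergodic averages takes values in $[0,1]$ and has $\int\bar g\,d\mu=\mu[\v 0\in A]\geq\delta$, so $\bar g<\delta$ cannot hold $\mu$-a.e.
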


\begin{Proposition}\label{proposition:finitistic}
  \TheoremRef{mainfinitistic} and \TheoremRef{A} are equivalent.
\end{Proposition}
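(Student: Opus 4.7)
My plan is to establish the equivalence by proving each implication separately. The forward direction, \TheoremRef{mainfinitistic}$\Rightarrow$\TheoremRef{A}, is routine. Given $f$ and $A$ with $d^\ast(A)=\delta_0>0$, I set $\delta=\delta_0/4$, let $L=L(d,k,\delta,M)$ be supplied by the finitistic statement, and use the definition of $d^\ast$ to locate a box $\prod_i[N_i,N_i+L']$ of side $L'$, chosen to be a sufficiently large multiple of $L$, whose intersection with $A$ has density exceeding $\delta_0/2$. Tiling this box by $(L'/L)^d$ sub-boxes of side $L$ and averaging forces at least one sub-box to contain $A$ in density exceeding $\delta$. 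Since translating by an integer vector preserves the Lipschitz constant of $f$, the finitistic theorem applies to this sub-box and produces the desired $k$-subset.

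The reverse direction is proved by contrapositive via a Furstenberg-style compactness argument. Assume \TheoremRef{mainfinitistic} fails for some $d,k,\delta,M$; then for each $n\in\N$ there is a Lipschitz map $f_n\colon\Z^d\to\Z^{d+1}$ of constant $M$ and a set $A_n\subset[n]^d$ with $|A_n|>\delta n^d$ such that no $k$-subset of $A_n$ is mapped to a line by $f_n$. The goal is to synthesize from the sequence $(f_n,A_n)$ a limiting pair $(f,A)$ with $f$ Lipschitz of constant $M$, $d^\ast(A)>0$, and no $k$-subset of $A$ mapped to a line by $f$, contradicting \TheoremRef{A}.

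I would realise the compactness on $\Omega=\{0,1\}^{\Z^d}\times\mathcal F_M$, where $\mathcal F_M$ is the space of Lipschitz maps $\Z^d\to\Z^{d+1}$ of constant $M$ vanishing at the origin; the Lipschitz bound confines each coordinate $g(\v x)$ to a finite subset of $\Z^{d+1}$, so $\mathcal F_M$ is compact in the topology of pointwise convergence. The group $\Z^d$ acts on $\Omega$ by $\v u\cdot(B,g)=\bigl(B-\v u,\,g(\,\cdot+\v u)-g(\v u)\bigr)$, and the property ``no $k$-subset of $B$ has collinear image under $g$'' defines a closed, shift-invariant subset $S\subset\Omega$ (closed because each finitary instance of the condition depends on finitely many coordinates of the discrete product). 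Form the empirical measures
\[
\mu_n=\frac{1}{n^d}\sum_{\v v\in[n]^d}\delta_{\v v\cdot(A_n,f_n)},
\]
pass along a subsequence to a weak-$\ast$ limit $\mu$, which is $\Z^d$-invariant, supported on $S$, and satisfies $\mu(\{(B,g):\v 0\in B\})=\lim_n|A_n|/n^d\geq\delta$. The pointwise ergodic theorem for $\Z^d$-actions, applied to the clopen coordinate function $(B,g)\mapsto\mathbf 1_{\{\v 0\in B\}}$, yields that for $\mu$-a.e.\ pair $(B,g)$ the limit $\lim_N|B\cap[N]^d|/N^d$ exists, and its $\mu$-integral is at least $\delta$. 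Any pair with positive such density lies in $S$, has Lipschitz second coordinate of constant $M$, and satisfies $d^\ast(B)>0$, which contradicts \TheoremRef{A}.

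The main obstacle is precisely the inheritance of positive upper Banach density in the limit: a naive pointwise extraction of $\mathbf 1_{A_n}$ can easily collapse to the empty set, for instance when the bulk of $A_n$ concentrates in a corner of $[n]^d$ whose position drifts off to infinity. Averaging over the $\Z^d$-orbit as above sidesteps this by promoting the uniform density lower bound on each $A_n$ to a shift-invariant measure, after which an ergodic-theoretic selection recovers a single realization with genuinely positive density.
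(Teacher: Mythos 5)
Your argument is correct, but your reverse implication takes a genuinely different route from the paper's. The paper constructs a single explicit counterexample pair $(A,g)$ by gluing: it places translates $\v M_L+A_L$ far apart in $\Z^d$ and simultaneously offsets the ranges by vectors $\v N_L\in\Z^{d+1}$ chosen so that $\v N_L+f_L([L]^d)$ misses every line through two earlier image points; applying \TheoremRef{A} with $k+1$ in place of $k$ then pins $k$ of the collinear points into a single scale, giving the contradiction. You instead run a Furstenberg-style correspondence: you compactify the data $(A_n,f_n)$ in the shift space $\{0,1\}^{\Z^d}\times\mathcal{F}_M$, pass to a weak-$\ast$ limit of orbit-averaged point masses, and apply the pointwise $\Z^d$ ergodic theorem to select a typical configuration $(B,g)$, lying in the closed shift-invariant set $S$, with $d^\ast(B)>0$. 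Both routes are sound. The paper's gluing is elementary and self-contained but needs the $k\mapsto k+1$ device, the careful two-stage choice of the offsets $\v N_L$ and $\v M_L$, and the (suppressed) verification that a Lipschitz extension of $g$ across the gaps exists; your compactness argument sidesteps those geometric choices and is more mechanical, at the cost of importing the ergodic theorem (or, more economically, the ergodic decomposition together with the observation that some ergodic component $\nu$ has $\nu(\{(B,g):\v 0\in B\})>0$, which identifies the density of a.e.\ $B$ outright). Two steps worth stating explicitly in a full write-up: invariance of the weak-$\ast$ limit measure follows from $[n]^d$ being a F\o lner sequence in $\Z^d$, and closedness of $S$ holds because its complement is a countable union of clopen cylinder events, each depending on finitely many coordinates.
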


\begin{proof}It is trivial to see that \TheoremRef{mainfinitistic} implies \TheoremRef{A}.
To prove the converse suppose, for the sake of a contradiction, that \TheoremRef{mainfinitistic} is false.
Thus there are $d,k,\delta,M$ such that for every $L\in\N$ one can find a set $A_L\subset[L]^d$ with $|A_L|>\delta L^d$ and a Lipschitz function $f_L:[L]^d\rightarrow\Z^{d+1}$ with Lipschitz constant $M$ such that for any $X\subset A_L$ with $|X|=k$, the image $f_L(X)$ is not contained in a single line.

Let the sequence $(\v N_L)_{L=1}^\infty$ in $\Z^{d+1}$ be defined recursively by letting $\v N_1=\v 0$ and, for each $L>1$,
by choosing $\v N_L\in\Z^{d+1}$ such that $\v N_L+f_L([L]^d)$ is disjoint from all the lines which contain at least two points of the union $\bigcup_{j=1}^{L-1}\v N_j+f_L([j]^d)$.

Next let $(\v M_L)_{L=1}^\infty$ be a sequence in $\Z^d$ such that the $\|.\|_1$-distance between $\v M_L+[L]^d$ and the union $\bigcup_{j=1}^{L-1}\v M_j+[j]^d$ is at least $\|\v N_L\|_1$.
Let $A$ be the union of $\v M_L+A_L$ over all $L\in\N$; it is clear that $d^\ast(A)\geq\delta$.

Finally, define $g:\Z^d\to\Z^{d+1}$ to be a Lipschitz function with Lipschitz constant $M$ such that for every $L\in\N$ and $\v x\in\v M_L+[L]^d$ we have $g(\v x)=\v N_L+f_L(\v x-\v M_L)$.

According to \TheoremRef{A} one can find $X\subset A$ with $|X|=k+1$ such that $g(X)$ is contained in a line.
Find the maximal $L\in\N$ for which there exists some $\v x\in X$ with $\v x\in\v M_L+[L]^d$.
Observe that $g(\v x)\in\v N_L+f_L([L]^d)$, so the line which contains $g(X)$ cannot contain more than one point from $\bigcup_{j=1}^{L-1}\v N_j+f_L([j]^d)$.
Therefore, there is a subset $Y\subset X$ with $|Y|=k$ such that $Y\subset\v M_L+[L]^d$.
Since $g(Y)$ is still contained in a single line and $g(Y)=\v N_L+f_L(Y-\v M_L)$, the set $\tilde Y=Y-\v M_L$ is a subset of $A_L$, with cardinality $k$ such that $f(\tilde Y)$ is contained in a line, thus contradicting the construction.
This contradiction finishes the proof.
\end{proof}
Pomerance's original formulation of \TheoremRef{Pomerance} in \cite{Pomerance80} was in finitistic terms.
More precisely, he showed that for every $k,M\in\N$ there exists $n=n(k,M)\in\N$ such that whenever $\v u_0,\ldots,\v u_n\in\Z^2$ satisfy $\sum_{i=1}^n\|\v u_i-\v u_{i-1}\|_2\leq nM$, there are $k$ collinear points among $\v u_1,\ldots,\v u_n$.
This statement clearly implies \TheoremRef{Pomerance}; the reverse implication can be deduced similarly to the proof of \PropositionRef{finitistic}.

As mentioned in the Introduction, the case $d=1$ of \TheoremRef{A} is equivalent to \TheoremRef{Pomerance}.
To see how Pomerance's theorem implies the case $d=1$ of \TheoremRef{A}, we will use the finitistic versions of both theorems.

Let $k,\delta,M$ be as in \TheoremRef{mainfinitistic} and let $f:\Z\to\Z^{2}$ be a Lipschitz function with constant $M$.
Let $L=M/\delta$ and let $n=n(k,L)$ be given by Pomerance's theorem.
Finally let $N\geq n/\delta$.

Take any $A\subset[N]$ with $|A|>\delta N\geq n$ and order it,
$A=\{a_1<\cdots<a_n\}$.
Let $\v u_i=f(a_i)$.
It now suffices to show that the average gap of the sequence $\v u_1,\ldots,\v u_n$ is at most $L$ and the result will follow by Pomerance's theorem.
Indeed we have
$$\sum_{i=1}^{n-1}\|\v u_{i+1}-\v u_i\|_2=\sum_{i=1}^{n-1}\|f(a_{i+1})-f(a_i)\|_2\leq M\sum_{i=1}^{n-1}a_{i+1}-a_i=M(a_n-a_1)\leq nL.$$

To prove the converse direction (i.e., that \TheoremRef{A} with $d=1$ implies \TheoremRef{Pomerance}), let $\v u_1,\v u_2,\ldots$ be a sequence in $\Z^2$ with gaps bounded on average by $M$.
For each consecutive pair
$\v u_i,\v u_{i+1}$ consider a path of minimal $\|\cdot\|_1$ length connecting
$\v u_i$ with $\v u_{i+1}$.
Each such path will have length
$\|\v u_{i+1}-\v u_i\|_1$ and stringing them together defines a
Lipschitz function $f:\Z\rightarrow \Z^2$.
Next construct the set
$A=\{a_i\}_{i\in\N}$ recursively by setting $a_1=1$ and
$a_{i+1} = a_i + \|\v u_{i+1}-\v u_i\|_1$.
It is then easy to check that $A$ has density bounded
from below by $1/M$ and that $f(a_i)=\v u_i$. Thus by applying
\TheoremRef{A} we can find $X\subset A$ with $|X|=k$ such that
$f(X)\subset\{\v u_1, \v u_2,\ldots\}$ is collinear.

As a Corollary of \TheoremRef{B} we immediately obtain the
following ``coloring'' version  of our main theorem:

\begin{Corollary}\label{Corollary:B}
Let $n,h,M\in\N$, let $f:\Z^{n}\rightarrow \Z^{n+h}$ be a Lipschitz
map and suppose $\Z^{n+h}$ has been colored with finitely many colors.
Then given any positive integer $k$, there exists a subset
$X\subset \Z^n$ of size $k$ with
$f(X)$ monochromatic and contained in a
$n$-dimensional subspace of $\Z^{n+h}$.
\end{Corollary}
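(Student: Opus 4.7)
The plan is to deduce this corollary from \TheoremRef{B} by the standard pigeonhole passage from density to coloring statements. Given a finite coloring $c: \Z^{n+h} \to [r]$ of the codomain, I pull it back through $f$ to obtain a coloring $\tilde c := c \circ f : \Z^n \to [r]$, which partitions $\Z^n$ into the fibers $A_i := \tilde c^{-1}(i)$ for $i \in [r]$.

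Next I would use the finite subadditivity of the upper Banach density together with $d^*(\Z^n) = 1$ to conclude that $\sum_{i=1}^r d^*(A_i) \geq 1$, so that at least one color class $A_{i_0}$ satisfies $d^*(A_{i_0}) \geq 1/r > 0$. This is the only point at which pigeonhole enters, and its sole role is to import the positive-density hypothesis required by \TheoremRef{B}.

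Having secured such a positive-density color class, I would then invoke \TheoremRef{B} with $d = n$, the same integer $h$, the same Lipschitz map $f$, and the set $A = A_{i_0}$. For any prescribed $k \in \N$ this yields a subset $X \subset A_{i_0}$ with $|X| = k$ such that $f(X)$ is contained in an $h$-dimensional affine subspace of $\Z^{n+h}$. Since every element of $X$ lies in $A_{i_0}$, every point of $f(X)$ carries color $i_0$, so $f(X)$ is monochromatic.

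The argument is entirely formal once \TheoremRef{B} is available, so no genuine obstacle arises. The only point worth verifying is the subadditivity of $d^*$ over a finite partition, which follows immediately from the definition of $d^*$ as a $\limsup$ over suprema of averages of indicator functions on axis-aligned boxes. In particular, the Lipschitz constant $M$ plays no role beyond ensuring that the hypothesis of \TheoremRef{B} is satisfied.
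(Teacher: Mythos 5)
Your proof is correct and matches the argument the paper intends: the paper merely asserts that this corollary follows ``immediately'' from \TheoremRef{B}, and you have supplied the standard pullback-and-pigeonhole step, using finite subadditivity of $d^\ast$ (together with $d^\ast(\Z^n)=1$) to extract a color class $A_{i_0}$ with $d^\ast(A_{i_0})\geq 1/r>0$, to which \TheoremRef{B} then applies. One point worth flagging: \TheoremRef{B} gives that $f(X)$ lies in an $h$-dimensional affine subspace, which is exactly what your argument yields, whereas the corollary as printed says ``$n$-dimensional subspace''; this is evidently a misprint for $h$, since when $h=1$ the paper itself says the corollary reduces to \TheoremRef{maincoloring}, whose conclusion is collinearity, i.e.\ containment in a $1$-dimensional affine subspace.
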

Similarly to \PropositionRef{finitistic}, the case $h=1$ of this corollary is equivalent to \TheoremRef{maincoloring}.
\section{Outlining the proof of \TheoremRef{A}}
\label{section:outline}

Throughout the rest of this paper, let $d,k\in\N$, $M\in\R^+$ and
$A\subset\Z^d$ with $d^\ast(A)>0$ be arbitrary but fixed.
In the following, these four
parameters will be invisible
in the notation to reduce the amount of subscripts.

Let $\LipZ$ denote the set of all Lipschitz functions
$f: \Z^d \rightarrow \Z^{d+1}$ with Lipschitz constant $M$, and with the property that there
exists no set $X\subset A$ with $|X|\geq k$ such that $f(X)$ is
collinear. Thus \TheoremRef{A} is proven if we can show that
$\LipZ$ is in fact the empty set for all $d,k,M,A$.

\begin{Definition}
A \define{generalized line segment} is a function
$\ell: [0,1]\rightarrow \Z^d$ of the form
\[
\ell(t)=\lfloor(1-t)\v x+ t\v y \rfloor
\]
for some $\v x,\v y\in\R^d$.
Given a generalized line segment $\ell$ we denote by
\define{$\mell$} the distance $\mell=\|\ell(1)-\ell(0)\|_2$.
\end{Definition}

The underlying argument of the proof goes back to Ramsey's paper \cite{Ramsey77}, and was adapted by Pomerance in \cite{Pomerance80}.
The basic idea is to find a long, narrow cylinder in $\Z^{d+1}$ which contains ``many'' points from $f(A)$.
We can then cover this cylinder with not too many lines that are almost parallel to the axis of the cylinder, which allows us to find some line containing at least $k$ points.
However, our methods to find such a cylinder differ significantly from both Ramsey and Pomerance, mainly due to our appeal to the classical Rademacher's theorem:

 \begin{Theorem}[Rademacher's Theorem, cf. {\cite[Theorem 3.1]{Heinonen05}}]
   Let $d,h\in\N$ be arbitrary dimensions, let $U\subset\R^d$ and let $f:U\to\R^h$ be Lipschitz.
   Then $f$ is differentiable at (Lebesgue) almost every point $\v x\in U$.
 \end{Theorem}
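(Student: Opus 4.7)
The plan is to proceed in three stages. First, reduce to the scalar case: a map $f = (f_1,\dots,f_h):U\to\R^h$ is differentiable at $\v x$ iff each coordinate $f_j$ is, and a finite union of null sets is still null, so it suffices to treat Lipschitz $f:U\to\R$. In this scalar case one also reduces from an arbitrary open $U$ to the whole of $\R^d$ by using a standard Lipschitz extension (e.g.\ the McShane extension $\tilde f(\v x)=\inf_{\v y\in U}(f(\v y)+M\|\v x-\v y\|_2)$), which preserves the Lipschitz constant.

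Second, establish directional derivatives almost everywhere. Fix a unit vector $\v v\in S^{d-1}$ and consider the restriction of $f$ to any affine line parallel to $\v v$. Each such restriction is a $1$-variable $M$-Lipschitz function, hence absolutely continuous, hence differentiable at Lebesgue-a.e. point of the line (this is the classical differentiation theorem for monotone / BV functions of one variable). By Fubini, integrating the null set of bad points on each line over the perpendicular hyperplane, the directional derivative $D_{\v v}f(\v x)$ exists for a.e.\ $\v x\in\R^d$. Take a countable dense set $\{\v v_n\}\subset S^{d-1}$ (plus the standard basis vectors $\v e_1,\dots,\v e_d$) and let $E$ be the full-measure set where all $D_{\v v_n}f$ and all partials $\partial_i f$ exist simultaneously.

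Third, and this is the crux, show that at a.e.\ $\v x\in E$ the map $\v v\mapsto D_{\v v}f(\v x)$ is linear and in fact equals the Fr\'echet differential. The standard tool is to test against a smooth compactly supported $\phi$: for any $\v v$ in our dense set,
\begin{equation*}
\int_{\R^d}\phi(\v x)\,D_{\v v}f(\v x)\,d\v x = -\int_{\R^d}f(\v x)\,D_{\v v}\phi(\v x)\,d\v x = -\sum_{i=1}^d v_i\int_{\R^d}f(\v x)\,\partial_i\phi(\v x)\,d\v x,
\end{equation*}
where the first equality comes from the dominated convergence theorem applied to the uniformly bounded difference quotients $(f(\v x+t\v v)-f(\v x))/t$, and the right-hand side equals $\sum_i v_i\int\phi\,\partial_i f\,d\v x$ again by the same device in the coordinate directions. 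Since $\phi$ is arbitrary, $D_{\v v}f(\v x)=\sum_i v_i\,\partial_i f(\v x)$ for a.e.\ $\v x$; intersecting over the countable family $\{\v v_n\}$ still yields a full-measure set $E'$ on which linearity in the direction holds for every $\v v_n$.

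Finally, at any $\v x\in E'$ one upgrades directional differentiability along $\{\v v_n\}$ to full Fr\'echet differentiability using $M$-Lipschitzness. Given $\v v\in S^{d-1}$ and $\v v_n$ close to $\v v$, the Lipschitz bound gives
\begin{equation*}
\left|\frac{f(\v x+t\v v)-f(\v x)}{t}-\frac{f(\v x+t\v v_n)-f(\v x)}{t}\right|\leq M\|\v v-\v v_n\|_2
\end{equation*}
uniformly in small $t$, and the right-hand side can be made arbitrarily small by density. Combined with the pointwise linearity $D_{\v v_n}f(\v x)=\nabla f(\v x)\cdot\v v_n$ and a compactness argument on $S^{d-1}$, this shows $f(\v x+\v h)=f(\v x)+\nabla f(\v x)\cdot\v h+o(\|\v h\|_2)$, completing the proof. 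The main obstacle is the third step: without the distributional/mollification argument one has no reason to expect the pointwise directional derivatives to fit together linearly, and the countable-dense-to-all-directions upgrade in the last step genuinely needs the Lipschitz hypothesis to supply uniform control.
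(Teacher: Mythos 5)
The paper does not prove Rademacher's theorem; it only cites it from Heinonen's lecture notes \cite[Theorem 3.1]{Heinonen05} and uses it as a black box in the proof of Theorem~\ref{theorem:continuous}, so there is no ``paper's own proof'' to compare against. Your argument is the classical proof of Rademacher's theorem (as found in Evans--Gariepy, Heinonen, and elsewhere), and it is correct in its essentials: (1) reduce to scalar-valued $f$ on all of $\R^d$ via McShane extension; (2) get directional derivatives $D_{\v v}f$ a.e.\ by slicing into lines and applying the one-variable Lebesgue differentiation theorem plus Fubini; (3) prove $D_{\v v}f = \nabla f\cdot\v v$ a.e.\ by pairing against test functions, using dominated convergence on the uniformly bounded difference quotients to justify the integration by parts; (4) upgrade to Fr\'echet differentiability at a.e.\ point via a countable dense set of directions, an $\epsilon$-net on $S^{d-1}$, and the $M$-Lipschitz bound to control the error uniformly in the direction. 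The only places you compress are in step (4), where the ``compactness argument'' should be spelled out as: fix $\epsilon$, choose an $\epsilon/(3M)$-net $\v v_{n_1},\dots,\v v_{n_m}$, pick $\delta>0$ small enough that all $m$ difference quotients are within $\epsilon/3$ of $\nabla f(\v x)\cdot\v v_{n_j}$ for $|t|<\delta$, and then for arbitrary $\v v\in S^{d-1}$ use the nearest $\v v_{n_j}$ together with $|\nabla f(\v x)|\leq M\sqrt{d}$; and in step (1), where you should note that $U$ must be taken open (or of full measure) for the statement to have content, which is the intended reading. Neither is a gap, just a place to add a sentence.
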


Rademacher's theorem tells us that a Lipschitz function is almost everywhere locally `flat' in a certain sense.
We will use this property to find the cylinder with the desired properties.

\begin{Definition}\label{definition:XZ}
Assume $\LipZ$ is non-empty. Given $f\in\LipZ$, $\epsilon,\constantone>0$ and
$\v w=(w_1,\dots,w_{d+1})\in S^d\subset\R^{d+1}$
we define \define{$\XZ{f}{\epsilon}{\constantone}{\v w}$} to be the
collection of all
generalized line segments $\ell:[0,1]\to\Z^d$ with $\epsilon m_\ell>14\sqrt{d}$ and satisfying the following properties:

\begin{enumerate}
[label=(z-\roman{enumi})~~~~,ref=(z-\roman{enumi}),leftmargin=*]
\item
\label{item:z-i}
$\left\|\vell-\v w\right\|_2<\epsilon,$
where $\vell$ denotes the `mean slope' of $\fcircell$,
$$
\vell=
\frac{\fcircell(1)-\fcircell(0)}
{\big\|\fcircell(1)-\fcircell(0)\big\|_2}.
$$

\item
\label{item:z-ii}
For every $t\in [0,1]$,
$$\left\|\fcircell(t)-\big[(1-t)\fcircell(0)+
t\fcircell(1)\big]\right\|_2<\epsilon Mm_\ell.$$
Roughly speaking, this condition states that the image of the generalized line segment
$\ell$ under $f$ remains relatively close to a line.
\item
\label{item:z-iii}
If we let $\KZ=\KZ(\epsilon,\ell)$ be the cylinder defined by
$$
\KZ=\big\{\v z \in \Z^d:\min_{t\in [0,1]}\|\v z-
\ell(t)\|_2\leq\epsilon\mell\big\}
$$ then
$\big|A\cap\KZ\big|>\constantone|\KZ|.$
\end{enumerate}
\end{Definition}

\begin{Lemma}\label{lemma:XZ}
Suppose $\LipZ$ is non-empty. Then for every $f\in\LipZ$ there exists $\delta>0$ and $\v w\in S^d$ such that the set
$\XZ{f}{\epsilon}{\constantone}{\v w}$ is non-empty for all sufficiently small $\epsilon>0$.
\end{Lemma}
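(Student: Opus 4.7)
The plan is to reduce \LemmaRef{XZ} to an analogous statement about Lipschitz maps $[0,1]^d\to\R^{d+1}$ and then invoke Rademacher's theorem together with the Lebesgue differentiation theorem. I sketch three steps in order.

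\textbf{Rescaling and compactness.} Choose a sequence of cubes $Q_n = \v N_n + [0,L_n]^d$ with $L_n\to\infty$ and $|A\cap Q_n| \geq d^\ast(A) L_n^d/2$. Rescale by setting
\[
F_n(\v x) = L_n^{-1}\bigl(f(L_n \v x + \v N_n) - f(\v N_n)\bigr),
\]
initially on $L_n^{-1}(\Z^d - \v N_n) \cap [0,1]^d$ and then extended to $F_n : [0,1]^d \to \R^{d+1}$ by piecewise affine interpolation, so the Lipschitz constant is at most $c_d M$ for a dimensional constant $c_d$. Simultaneously introduce the normalized counting measures
\[
\mu_n = L_n^{-d} \sum_{\v a \in A \cap Q_n} \delta_{(\v a - \v N_n)/L_n}
\]
on $[0,1]^d$, each of mass $\geq d^\ast(A)/2$ and dominated by Lebesgue measure up to lower-order boundary corrections. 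Arzel\`a--Ascoli and weak-$\ast$ compactness yield a common subsequence along which $F_n \to F$ uniformly for some Lipschitz $F:[0,1]^d\to\R^{d+1}$ and $\mu_n \to \mu$ weakly for a Borel measure $\mu$ of total mass $\geq d^\ast(A)/2$ whose Radon--Nikodym derivative $g = d\mu/d\v x$ satisfies $g \leq 1$ a.e.

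\textbf{Locating the good point.} I next seek $\v x_0 \in (0,1)^d$ which is simultaneously (a) a Lebesgue point of $g$ with $g(\v x_0) > 0$, (b) a point of differentiability of $F$ (Rademacher), and (c) has $DF(\v x_0) \neq 0$. Conditions (a) and (b) hold on a subset of $\{g>0\}$ of full measure, so the only issue is excluding the degenerate case in which $DF\equiv 0$ almost everywhere on $\{g>0\}$. This is ruled out using $f\in\LipZ$: if $DF=0$ almost everywhere on a positive-measure $E\subset[0,1]^d$, the area formula forces $F(E)$ to have zero $d$-dimensional Hausdorff measure, and a quantitative version of this together with the uniform convergence $F_n\to F$ shows that $f(A\cap Q_n)$ occupies fewer than $|A\cap Q_n|/k$ distinct points of $\Z^{d+1}$ for all large $n$; pigeonhole then produces $k$ distinct points of $A\cap Q_n$ with a common $f$-value, contradicting $f\in\LipZ$. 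Having fixed $\v x_0$, pick any $\v v\in S^{d-1}$ with $DF(\v x_0)\v v\neq 0$ and set
\[
\v w = \frac{DF(\v x_0)\v v}{\|DF(\v x_0)\v v\|_2} \in S^d, \qquad \constantone = g(\v x_0)/3.
\]

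\textbf{Constructing the discrete segment.} Given small $\epsilon>0$, pick $r>0$ small enough that the segment $\v x_0 + [0,r]\v v$ lies in $[0,1]^d$, that $F$ lies within $\epsilon M r/4$ of its affine tangent at $\v x_0$ on this segment (differentiability), and that the tube $T_r=\{\v y\in\R^d:\mathrm{dist}(\v y,\v x_0+[0,r]\v v)\leq \epsilon r\}$ satisfies $\mu(T_r)\geq 2\constantone|T_r|$; the tube estimate comes from the Lebesgue differentiation theorem applied to the regular family $\{T_s\}_{s\to 0}$, whose eccentricity relative to Euclidean balls is uniformly bounded for fixed $\epsilon$. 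Then choose $n$ large enough that $L_n r > 14\sqrt d/\epsilon$, that $\|F_n-F\|_\infty$ is small enough to preserve the previous two estimates up to constant factors, and that $\mu_n(T_r)\geq\constantone|T_r|$. With $\v y_0 := \v N_n + \lfloor L_n \v x_0\rfloor$, the generalized line segment
\[
\ell(t) = \lfloor (1-t)\v y_0 + t(\v y_0 + r L_n \v v)\rfloor
\]
then has $m_\ell = r L_n > 14\sqrt d/\epsilon$. Condition \ref{item:z-i} and condition \ref{item:z-ii} follow directly from uniform convergence of $F_n$ to $F$ near $\v x_0$; condition \ref{item:z-iii} follows from the tube estimate together with the weak convergence $\mu_n\to\mu$ after identifying the rescaled discrete cylinder $\KZ/L_n$ with $T_r$ up to lattice errors of order $1/L_n$.

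\textbf{Main obstacle.} The analytically delicate step is ruling out $DF\equiv 0$ almost everywhere on $\{g>0\}$: translating this continuous degeneracy into a quantitative upper bound on $|f(A\cap Q_n)|$ robust under the merely uniform convergence $F_n\to F$ is where the Ramsey-theoretic hypothesis $f\in\LipZ$ is used in an essential way (the rest of the argument is purely analytic). A secondary bookkeeping point is uniformity in $\epsilon$: the constants $(\constantone,\v w)$ are chosen once and for all, independent of $\epsilon$, while $\ell$ depends on $\epsilon$ through the choices of $r$ and $n$; this is handled by noting that at the fixed point $\v x_0$ the differentiability estimate and the tube-density estimate both hold for all sufficiently small scales simultaneously.
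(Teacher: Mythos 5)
Your overall architecture mirrors the paper's: rescale $f$ and $A$ on a sequence of large boxes to obtain a Lipschitz limit $F$ (the paper's $T$) and a limiting density $g$ (the paper's $\phi$), locate a simultaneous Lebesgue/Rademacher point where $g>0$ and $DF\neq 0$, and then translate a thin tube at that point back into a generalized line segment. One notable simplification you make: rather than proving the paper's radial--angular decomposition (Lemma~\ref{lemma:partition}) to find a positive-measure set of good directions, you fix a direction $\v v$ with $DF(\v x_0)\v v\neq 0$ first and invoke Lebesgue differentiation for the regular family of tubes $\{T_r\}$ in that direction. For fixed $\epsilon$ these tubes do have bounded eccentricity relative to balls, so every Lebesgue point of $g$ is also a Lebesgue point for this family; this is cleaner and, I believe, correct, and it replaces the paper's Lemma~\ref{lemma:partition} entirely.

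However, the exclusion of the degenerate case $DF\equiv 0$ a.e.\ on $\{g>0\}$ has a real gap. You deduce $\mathcal{H}^d(F(E))=0$ from the area formula and then assert that ``a quantitative version of this together with the uniform convergence $F_n\to F$ shows that $f(A\cap Q_n)$ occupies fewer than $|A\cap Q_n|/k$ distinct points.'' This does not follow. If you cover $F(E)$ by balls $B(y_j,r_j)$ with $\sum r_j^d<\eta$, the rescaled cover $\bigcup B(L_n y_j,2L_n r_j)$ contains on the order of $L_n^{d+1}\sum r_j^{d+1}$ lattice points of $\Z^{d+1}$; this quantity is of a higher order than $L_n^d$ and is not controlled by $\sum r_j^d$, so no choice of $\eta$ independent of $n$ gives a bound below $|A\cap Q_n|/k\sim L_n^d$. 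The quantity you should bound is not the number of lattice points but the number of coordinate-axis lines covering the image (each ball of radius $R$ is covered by $O(R^d)$ lines, so the total is $O(L_n^d\sum r_j^d)=O(L_n^d\eta)$), and then $f\in\LipZ$ caps the number of $A$-preimages per line by $k-1$. You also need to restrict from $A\cap Q_n$ to $A\cap(\v N_n+L_n E')$ for a compact $E'\subset E$ of positive measure and control $|A\cap(\v N_n+L_n E')|$ from below via the weak convergence $\mu_n\to\mu$ (which requires $E'$ to be a continuity set for $\mu$). The paper sidesteps all of this by running the argument locally at a single zero-derivative point, where the covering-by-lines count versus the lattice-point count in a ball of comparable scale gives the contradiction directly; that local form is what you should reproduce. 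Incidentally, for this contradiction to use $\LipZ$ correctly you must restrict the domain ball to points of $A$ (as your formulation on $\{g>0\}$ implicitly does) -- the hypothesis only forbids $k$ collinear images of $A$, not of all of $\Z^d$.
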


It is our goal to use Rademacher's theorem to deduce \LemmaRef{XZ}.
In order do this, we need first to convert \LemmaRef{XZ} into a continuous version; this is done by \TheoremRef{continuous} in \SectionRef{ContinuousVersion}.
In \SectionRef{Conclusion} we use \LemmaRef{XZ} together with the methods developed by Ramsey and Pomerance to finish the proof.

\section{Deducing \LemmaRef{XZ} from a Continuous Version}
\label{section:ContinuousVersion}

We use $\lambda$ to represent the Lebesgue measure on $\R^d$ and
define the ball $B_\R(\v x,r)=\{\v y\in\R^d : \|\v x-\v y\|_2\leq r\}$ for any $\v x\in\R^d$ and $r>0$.
\begin{Definition}\label{definition:XR}
Let $T:[-1,1]^d\to[-1,1]^{d+1}$ be a Lipschitz function with Lipschitz constant $1$,
let $\phi: [-1,1]^d\rightarrow [0,1]$ be Lebesgue measurable,
let $\v x\in[-1,1]^d$ and
let $\v w\in S^d\subset\R^{d+1}$.
For each $\epsilon,\delta>0$ we define the set
\define{$\XR{T}{\epsilon}{\delta}{\v w}{\v x}$} as the set of all
$\v y\in[-1,1]^d$ with the following properties:
\begin{enumerate}
[label=(r-\roman*)~~~~,ref=(r-\roman*)]
\item
$$
\left\|\frac{T(\v y)-T(\v x)}{\big\|T(\v y)-T(\v x)\big\|_2}-
\v w\right\|_2<\epsilon.
$$
This asserts that the direction of the line segment
connecting $T(\v x)$ and $T(\v y)$ is approximately equal to $\v w$.
\item
For every $t\in[0,1]$,
$$
\Big\|T\big((1-t)\v x+t\v y\big)-
\big[(1-t)T(\v x)+tT(\v y)\big]\Big\|_2<\epsilon\|y-x\|_2.
$$
Similar to condition \ref{item:z-ii}, this condition states that the image under $T$ of
the line segment connecting $\v x$ and $\v y$
remains relatively close to a line.
\item
If we let
$$
\KR=\KR(\epsilon, \v x, \v y)=
\{(1-t)\v x+t\v y: t\in[0,1]\}+B_\R(\v 0,\epsilon\|\v y-\v x\|_2)
$$ then
$$
\frac{1}{\lambda(\KR)}\int_\KR\phi~d\lambda>\constantone.
$$
In other words, $\phi$ gives enough mass to a thin cylinder around the
segment connecting $\v x$ and $\v y$.
\end{enumerate}
\end{Definition}

We denote by $\LipR$ the set of Lipschitz functions
$T:[-1,1]^d\to[-1,1]^{d+1}$
with Lipschitz constant $1$ and
with the property that for any
point $\v x\in(-1,1)^d$ where $T$ is differentiable, the
derivative is nonzero (i.e., some partial derivative is nonzero).

\begin{Theorem}\label{theorem:continuous}
Let $T\in\LipR$ and let $\phi\in L^\infty\big([-1,1]^d\big)$ be
non-negative with $\int\phi d\lambda>0$.
Then there exist $\constantone>0$, $\v x\in[-1,1]^d$ and
$\v w\in S^d$ such that for every sufficiently small $\epsilon>0$ the set
$\XR{T}{\epsilon}{\constantone}{\v w}{\v x}$ is non-empty.
\end{Theorem}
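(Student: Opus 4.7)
The plan is to pinpoint a single base point $\v x\in(-1,1)^d$ at which all three conditions can be verified simultaneously, by taking $\v y=\v x+r\v u$ with $r>0$ small (depending on $\epsilon$) for a fixed direction $\v u\in S^{d-1}$. The two main tools are Rademacher's theorem (to deliver (r-i) and (r-ii) from differentiability of $T$ at $\v x$) and the Lebesgue differentiation theorem (to deliver (r-iii) from cylinder averages of $\phi$ converging to $\phi(\v x)$).

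First I would select the base point. By Rademacher's theorem, $T$ is differentiable almost everywhere in $(-1,1)^d$, and by the definition of $\LipR$ the derivative is nonzero at every such point. By the Lebesgue differentiation theorem, almost every point of $[-1,1]^d$ is a Lebesgue point of $\phi$. Since $\phi\geq 0$ and $\int\phi\,d\lambda>0$, the set $\{\phi>0\}$ has positive Lebesgue measure, so by intersecting these three conditions I can pick $\v x\in(-1,1)^d$ at which $T$ is differentiable with $DT(\v x)\neq 0$, at which $\v x$ is a Lebesgue point of $\phi$, and at which $\phi(\v x)>0$. Choose any $\v u\in S^{d-1}$ with $\v a:=DT(\v x)\v u\neq\v 0$ and set
\[
\v w\,:=\,\v a/\|\v a\|_2\,\in\,S^d,\qquad\constantone\,:=\,\phi(\v x)/2\,>\,0.
\]

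Next, for each small $\epsilon>0$ I take $\v y=\v x+r\v u$ for a small $r>0$ to be determined. Differentiability of $T$ at $\v x$ gives, for any $\eta>0$, a threshold $\rho(\eta)>0$ with $\|T(\v x+\v h)-T(\v x)-DT(\v x)\v h\|_2\leq\eta\|\v h\|_2$ whenever $\|\v h\|_2<\rho(\eta)$. Applying this with $\v h=r\v u$ and $\v h=tr\v u$, and choosing $\eta$ to be a suitable multiple of $\epsilon$ (depending only on $\|\v a\|_2$), both (r-i) and (r-ii) follow at parameter $\epsilon$, provided $r<\rho(\eta)$. For (r-iii), note that for fixed $\epsilon$ the cylinders $\KR(\epsilon,\v x,\v x+r\v u)$, $r\to 0$, are contained in $B_\R(\v x,r(1+\epsilon))$ and occupy a fixed fraction of order $\epsilon^{d-1}$ of the ball's volume, so they form a regular shrinking family at $\v x$. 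The Lebesgue point property of $\phi$ then yields
\[
\frac{1}{\lambda(\KR)}\int_{\KR}\phi\,d\lambda\,\longrightarrow\,\phi(\v x)\,>\,2\constantone\qquad\text{as }r\to 0,
\]
so (r-iii) holds for all $r$ below some $r_0(\epsilon)>0$. Taking $r<\min(\rho(\eta),r_0(\epsilon))$ produces the desired $\v y$.

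The main subtlety to watch is that the cylinder aspect ratio $1/\epsilon$ blows up as $\epsilon\to 0$: for each fixed $\epsilon$ the cylinders form a regular family, but with regularity constant of order $\epsilon^{d-1}$, and correspondingly the rate at which their averages converge to $\phi(\v x)$ degrades with $\epsilon$. This would be fatal if the theorem demanded $r_0(\epsilon)$ to be uniform in $\epsilon$, but since we only need the existence of some $\v y$ for each fixed (sufficiently small) $\epsilon$, we may shrink $r$ together with $\epsilon$ at no cost. The interiority of $\v x$ guarantees that for small $r$ both $\v y$ and the cylinder $\KR$ lie inside $[-1,1]^d$, so no boundary issues arise.
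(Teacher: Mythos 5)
Your proof is correct, and it takes a genuinely different and in fact cleaner route than the paper's. The paper first proves a separate ``partition lemma'' (\LemmaRef{partition}): it decomposes ball averages in polar coordinates, applies Lebesgue differentiation on $S^{d-1}$, and runs a reverse-Fatou argument to produce a set $P\subset S^{d-1}$ of \emph{positive} $(d-1)$-dimensional measure of directions $\v z$ for which cylinder averages remain bounded below. The positive measure of $P$ is essential for the paper because they must then intersect $P$ with $\{\v z: J\v z\neq 0\}$ to find a usable direction. You instead pick the direction $\v u$ with $DT(\v x)\v u\neq 0$ \emph{first} and verify (r-iii) for that $\v u$ directly, via the regular-family form of the Lebesgue differentiation theorem: for fixed $\epsilon$, the cylinders $\KR(\epsilon,\v x,\v x+r\v u)$ are contained in $B_\R(\v x,(1+\epsilon)r)$ and occupy a fraction of order $\epsilon^{d-1}$ of that ball, so their averages tend to $\phi(\v x)$ as $r\to 0$. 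This dispenses with \LemmaRef{partition} entirely. You also correctly flag the one delicate point (the regularity constant degrades as $\epsilon\to 0$, so $r$ must shrink with $\epsilon$) and correctly observe that this costs nothing because the theorem only asks for existence of $\v y$ at each fixed small $\epsilon$. A further structural advantage of your orientation: you take $r\to 0$ for fixed $\epsilon$ to get \emph{all three} of (r-i), (r-ii), (r-iii), whereas the paper's \LemmaRef{partition} is phrased as a $\liminf_{\epsilon\to 0}$ statement for fixed $r\in R'$, which pulls the quantifiers in the opposite direction from what (r-i) and (r-ii) require and makes the final bookkeeping more awkward. One small point to make explicit: you need the \emph{strong} form of Lebesgue point ($\frac{1}{\lambda(B)}\int_B|\phi-\phi(\v x)|\,d\lambda\to 0$) rather than merely convergence of ball averages, since that is what the regular-family corollary uses; this is still a.e. and standard, but worth stating since the paper only invokes the weaker version.
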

In order to prove \TheoremRef{continuous} we will need
the following Lemma.

\begin{Lemma}\label{lemma:partition}
Let $R=\{r_n:n\in\N\}$ be an infinite subset of $\R^+$, let
$\phi\in L^\infty\big([-1,1]^d\big)$ be a non-negative function and let $\constantsix>0$. Assume that
\begin{equation}\label{equation:lemma_partition_condition}
\frac{1}{\lambda(B_\R(\v 0,r))}
\int_{B_\R(\v 0,r)}\phi~d\lambda\geq\constantsix\qquad\forall r\in R.\end{equation}
Let $\mu$ denote the $(d-1)$-dimensional Hausdorff measure
on $\R^d$.
Then there exists some constant $\constanttwo>0$ that only depends on
the dimension $d$ and a set $P\subset S^{d-1}$ with $\mu(P)>0$ and such that for any $\v z\in P$ there exists an infinite
subset $R'(\v z)\subset R$ such that
\[
\liminf_{\epsilon\rightarrow 0}\frac1
{\lambda(r\KR)}\int_{r\KR}
\phi~d\lambda\geq \constanttwo\constantsix ,\qquad\forall r\in R'(\v z)
\]
where
$\KR=\KR(\epsilon,\v 0, \v z)$ is as in \DefinitionRef{XR}.
\end{Lemma}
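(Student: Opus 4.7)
The plan is to combine a Fubini-style averaging over the sphere of directions $\v z \in S^{d-1}$ with a Lebesgue-differentiation argument, and then extract infinitely many radii via a $\limsup$ construction over $R$.

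For each $r \in R$ and $\epsilon>0$, I would first exchange the order of integration:
\[
\int_{S^{d-1}} \int_{r\KR(\epsilon,\v{0},\v z)} \phi\, d\lambda\, d\mu(\v z) = \int_{\R^d} \phi(\v y)\, \kappa_{r,\epsilon}(\v y)\, d\lambda(\v y),
\]
where $\kappa_{r,\epsilon}(\v y) := \mu(\{\v z \in S^{d-1} : \v y \in r\KR(\epsilon,\v{0},\v z)\})$. A direct geometric computation shows that for $\v y=\rho\v u$ with $r\epsilon<\rho\leq r$, the admissible $\v z$'s form a spherical cap of angular radius $\arcsin(r\epsilon/\rho)$ around $\v u$, giving $\kappa_{r,\epsilon}(\v y) \geq c'_d(r\epsilon/\rho)^{d-1}$. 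Polar coordinates (in which the Jacobian $\rho^{d-1}$ cancels the factor $\rho^{-(d-1)}$), together with the hypothesis $\int_{B_\R(\v{0},r)}\phi\,d\lambda \geq \constantsix\,\lambda(B_\R(\v{0},r))$ and the cylinder volume $\lambda(r\KR) \asymp_d r^d\epsilon^{d-1}$, then yield
\[
\int_{S^{d-1}} F_{r,\epsilon}(\v z)\, d\mu(\v z) \geq c_d\,\constantsix\,\mu(S^{d-1}) - O_{\|\phi\|_\infty}(\epsilon),
\]
for a dimensional constant $c_d>0$, where $F_{r,\epsilon}(\v z) := \tfrac{1}{\lambda(r\KR)}\int_{r\KR}\phi\,d\lambda$.

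Next, by Fubini perpendicular to the axis, the cylinder mean $F_{r,\epsilon}(\v z)$ equals, up to an $O(\epsilon)$ end-cap error, the $(d-1)$-dimensional average over the disk $B_\R(\v{0}, r\epsilon)\cap\v z^\perp$ of the line-integral function $g_{\v z}(\v v) := \int_0^r \phi(s\v z + \v v)\,ds$. Combining the $(d-1)$-dimensional Lebesgue differentiation theorem on $\v z^\perp$ with a Fubini bookkeeping on the sphere bundle $\{(\v z, \v v) : \v z\in S^{d-1},\, \v v \in \v z^\perp\}$, one establishes that for $\mu$-almost every $\v z$ the origin is a Lebesgue point of $g_{\v z}$, so $\lim_{\epsilon \to 0} F_{r,\epsilon}(\v z) = g_{\v z}(\v{0})/r$. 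Since $F_{r,\epsilon} \leq \|\phi\|_\infty$, bounded convergence combined with the previous display gives $\int_{S^{d-1}}\lim_\epsilon F_{r,\epsilon}\,d\mu \geq c_d\,\constantsix\,\mu(S^{d-1})$. Choosing $\constanttwo := c_d/2$ (purely dimensional) and applying reverse Markov shows that
\[
H_r := \bigl\{\v z \in S^{d-1} : \liminf_{\epsilon \to 0} F_{r,\epsilon}(\v z) \geq \constanttwo\,\constantsix\bigr\}
\]
satisfies $\mu(H_r) \geq c' > 0$ for every $r \in R$, with $c'$ depending on $\constantsix$ and $\|\phi\|_\infty$ but not on $r$. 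Finally, since $\mu(S^{d-1})<\infty$ and $\mu(H_r) \geq c'$ uniformly in $r\in R$, monotone continuity of $\mu$ on the decreasing family $\bigl\{\bigcup_{r \in R,\, r \geq N} H_r\bigr\}_{N \in \N}$ gives $\mu(\limsup_{r \in R} H_r) \geq c'$. Setting $P := \limsup_{r \in R} H_r$ and, for each $\v z \in P$, $R'(\v z) := \{r \in R : \v z \in H_r\}$, produces a $\mu$-positive subset of $S^{d-1}$ together with infinite subsequences $R'(\v z)$ on which the required $\liminf$ bound holds.

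The main obstacle is the Lebesgue-point claim in the second paragraph: the classical theorem gives almost-every-point convergence on each fixed $\v z^\perp$, but the origin is a distinguished point that a priori could fail the Lebesgue property for some positive-$\mu$ set of $\v z$. Showing that the bad $\v z$'s form a $\mu$-null set requires a careful Fubini decomposition on the sphere bundle, for instance by first estimating the easier cone averages around rays through $\v 0$ (which are manifestly rotation-equivariant) and then transferring the bound to the cylinder averages via a direct geometric comparison.
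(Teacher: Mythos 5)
Your approach is genuinely different from the paper's, and you have correctly diagnosed the gap in it yourself. The first paragraph (Fubini over directions with the kernel $\kappa_{r,\epsilon}$) is sound. The problem is the Lebesgue-point claim in the second paragraph: you need, for $\mu$-a.e.\ $\v z\in S^{d-1}$, the origin to be a Lebesgue point of $g_{\v z}(\v v)=\int_0^r\phi(s\v z+\v v)\,ds$ viewed as a function on the $(d-1)$-plane $\v z^\perp$. Fubini on the sphere bundle $\{(\v z,\v v):\v z\in S^{d-1},\ \v v\in\v z^\perp\}$ does tell you that the set of bad pairs $(\v z,\v v)$ is $\mu\times\lambda^{(d-1)}$-null, hence for $\mu$-a.e.\ $\v z$ the bad $\v v$'s form a $\lambda^{(d-1)}$-null subset of $\v z^\perp$; but the slice $\{\v v=\v 0\}$ is itself a null submanifold of the bundle (for $d>1$), so Fubini gives no control over whether $\v v=\v 0$ is bad for a positive-$\mu$ set of $\v z$. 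Without this claim, the available inequality is reverse Fatou applied to $1-F_{r,\epsilon}$, which yields only $\int_{S^{d-1}}\limsup_{\epsilon\to0}F_{r,\epsilon}\,d\mu\geq c_d\,\constantsix\,\mu(S^{d-1})$, and after Markov you obtain a set where $\limsup_{\epsilon\to0}F_{r,\epsilon}\geq\constanttwo\constantsix$. The lemma, however, asserts (and the downstream proof of Theorem~\ref{theorem:continuous} uses) the stronger $\liminf$ statement, namely that for each chosen $r$ the cylinder-mass bound holds for \emph{all} sufficiently small $\epsilon$, not just along some subsequence.

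Your proposed repair---first estimate cone averages around rays through $\v 0$, then compare the cone to the cylinder---is exactly the paper's argument, so to fill the gap you would in effect be carrying out the paper's proof. The paper sets $\psi_r(\v z)=\int_0^r\phi(t\v z)\,t^{d-1}\,dt$, uses the hypothesis and polar coordinates to bound $\int_{S^{d-1}}\psi_r\,d\mu$ from below, selects a set $A_r\subset S^{d-1}$ of $\mu$-measure $\gtrsim\constantsix$ on which $\psi_r$ is large (Markov), and then applies Lebesgue differentiation of $\psi_r$ \emph{on the compact sphere $S^{d-1}$}. Because the differentiation happens at a generic direction $\v z$ on the sphere rather than at the distinguished point $\v 0$ in the fiber $\v z^\perp$, no special-point issue arises: every $\v z$ that survives the two measure-theoretic selections works. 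The $\limsup$ of the resulting sets over $r\in R$ has positive measure by reverse Fatou, and for such $\v z$ the spherical-cap-based cone $\{t\v u: t\in[0,r],\ \v u\in D_\epsilon(\v z)\}$ is contained in $r\KR(\epsilon,\v 0,\v z)$ with a $\v z$-independent and $r$-independent volume ratio, which converts the cone bound into the required cylinder bound with a genuine $\liminf$ over $\epsilon$.
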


\begin{proof}

Without loss of generality we assume that $\|\phi\|_\infty\leq1$.
For each $r>0$, the measure space $(B_\R(\v0,r),\lambda)$ can be decomposed as the product of the measure spaces $(S^{d-1},\mu)$ and $([0,r],t^{d-1}dt)$.
Letting
\[
\psi_r(\v z)= \int_{[0,r]}
\phi(t\v z) t^{d-1}dt,
\]
we deduce from \EquationRef{lemma_partition_condition} that for every $r\in R$ we have
\[
\constantsix\lambda(B_\R(\v 0,r))\leq\int_{S^{d-1}} \psi_r(\v z) d\mu(\v z).
\]

Let $A_r$ be the set of those
$\vec z\in S^{d-1}$ for which
\begin{equation}\label{eq_psi}
\psi_r(\v z)> \frac{\constantsix\lambda(B_\R(\v 0,r))}{2\mu(S^{d-1})}.
\end{equation}
Observe that $|\psi_r(\v z)|\leq r^d/d$, and hence
$$\constantsix\lambda(B_\R(\v 0,r))\leq \frac{r^d}d\mu(A_r)+\frac{\constantsix\lambda(B_\R(\v 0,r))}{2\mu(S^{d-1})}\mu(S^{d-1}).$$
It follows that  $\mu(A_r)\geq c_0\constantsix/2$ for some constant $c_0$ which only depends on the dimension $d$.

Next we apply Lebesgue's differentiation theorem to
find a set $B_r\subset A_r$ with
$\mu(A_r)=\mu(B_r)$ and such that for every
$\vec z\in B_r$
\[
\psi_r(\v z)=\lim_{\epsilon\rightarrow 0}
\frac{1}{\mu(D_\epsilon(\v z))}
\int_{D_\epsilon(\v z)} \psi_r d\mu
\]
where $D_\epsilon(\v z)= B_\R(\v z,\epsilon)\cap S^{d-1}$.
It follows from (reverse) Fatou's lemma that the set
$$P=\limsup B_{r_m} =\bigcap_{n=1}^\infty\bigcup_{m=n}^\infty B_{r_m}$$
has measure $\mu(P)\geq c_0\constantsix/2>0$.
By construction, for every $\v z\in P$
there exists an infinite subset $R'\subset R$ such that
\[
\v z\in\bigcap_{r\in R'} B_r.
\]
This implies that for every $r\in R'$ and sufficiently small
$\epsilon$ we
can assume that
\[
\int_{D_\epsilon(\v z)}\psi_r~d\mu\geq
\frac{\constantsix\lambda(B_\R(\v 0,r))}{3\mu(S^{d-1})}\mu(D_\epsilon(\v z)).
\]
Let $C_r=\{t \v u : t\in[0,r],~\v u\in D_\epsilon(\v z)\}$. Then for $r\in R'$
\begin{eqnarray*}
\int_{C_r}\phi~d\lambda
&=&
\int_{D_\epsilon(\v z)}
\int_{[0,r]}\phi(t\v u)t^{d-1}~dt~d\mu(\v u)
\\
&=&
\int_{D_\epsilon(\v z)} \psi_r(\v u) d\mu(\v u)
\\
&\geq &
\frac{\constantsix\lambda(B_\R(\v 0,r))}{3{\mu(S^{d-1})}}\mu(D_\epsilon(\v z)).
\end{eqnarray*}

Finally we note that the cylinder
$r\KR=\KR(\epsilon, \v 0, r\v z)$
contains the cone $C_r$ and that for fixed $d$ the quotient
\[
\frac{\lambda(B_\R(\v 0,r))\mu(D_\epsilon(\v z))}
{3\lambda(r\KR)\mu(S^{d-1})}
\]
is constant. From this the lemma follows.
\end{proof}

\begin{Remark}
Observe that condition (\ref{equation:lemma_partition_condition}) in \LemmaRef{partition} can be replaced with
$$\frac{1}{\lambda(B_\R(\v x,r))}
\int_{B_\R(\v x,r)}\phi~d\lambda\geq
\constantsix$$
for an arbitrary point $\v x\in[-1,1]^d$.
In this case, the cylinder $\KR$ in the conclusion becomes $\KR=\KR(\epsilon,\v x, \v z)$.

To see this one can apply \LemmaRef{partition} to the function $\tilde\phi(\v y)=\phi(\v y-\v x)$.
\end{Remark}

\begin{proof}[Proof of \TheoremRef{continuous}]
First let us invoke Lebesgue's differentiation theorem as well as
Rademacher's Theorem to find a set $X\subset [-1,1]^d$ with full Lebesgue measure
such that for every $\v x\in X$ the map $T$ is differentiable
at $\v x$ and
\[
\phi(\v x)=\lim_{r\rightarrow 0} \frac{1}{\lambda(B_\R(\v x,r))}
\int_{B_\R(\v x,r)} \phi~d\lambda.
\]
Pick any point $\v x\in X$ such that $\phi(\v x)>0$. Then,
since $T$ is differentiable at $\v x$, there exists a linear map
$J:\R^d\to\R^{d+1}$, the Jacobian of $T$ at $\v x$, such that
$T(\v y)$ can be written as
\begin{equation}\label{equation:continuous}
T(\v y)=T(\v x)+ J\cdot(\v y - \v x)+ e(\v y-\v x) \|\v y-\v x\|_2,
\end{equation}
where the error term $e(\v z)$ is continuous and satisfies
$e(\v 0)=0$.
Since $T\in\LipR$, $J\neq0$.
Next take $R=\{\frac1n\}_{n\geq n_0}$.
For $n_0$ large enough we can apply \LemmaRef{partition} to $R$, $\phi$ and $\v x$.
Let $P\subset S^{d-1}$ be the set obtained this way.
Since $P$ has positive measure, it spans $\R^d$, and because
$J$ is a non-zero linear map, there exists some $\v z\in P$
for which $J\cdot\v z\neq0$. Since $\v z \in P$ we can find an infinite set
$R'\subset R$ such that
\[
\liminf_{\epsilon\rightarrow 0} \frac{1}{\lambda(r\KR)}
\int_{r\KR} \phi~d\lambda\geq \constanttwo \phi(\v x),
\qquad\forall r\in R'.
\]
where $\KR=\KR(\epsilon,\v x, \v z)$ is as in \DefinitionRef{XR}.

Set $\constantone= \frac{\constanttwo \phi(x)}{2}$ and set
$\v w = \tfrac{J \v z}{\|J\v z\|_2}\in S^d$.
We claim that with this choice of $\constantone$ and $\v w$
the set
$\XR{T}{\epsilon}{\constantone}{\v w}{\v x}$ is non-empty
for all sufficiently small $\epsilon>0$. To show this,
take $r\in R'$ sufficiently
small such that
$e(\v u)< \min(\epsilon/2,\|J\cdot z\|_2\epsilon/3)$ for all $\v u$ with
$\|\v u\|_2\leq r$. Thereafter set
$\v y = \v x + r \v z$. It follows from
\EquationRef{continuous} that
$$\left\|\frac{T(\v y)-T(\v x)}{\big\|T(\v y)-T(\v x)\big\|_2}-
\v w\right\|_2<\epsilon.$$
Also, provided that $\epsilon$ was chosen sufficiently small,
we have
\[
\int_{\KR(\epsilon,\v x,\v y)} \phi~d\lambda\geq \constantone \lambda\big(\KR(\epsilon,\v x,\v y)\big).
\]
At last, note that the distance between
$T\big(t\v x + (1-t)\v y\big)$ and
$t T\big(\v x\big) + (1-t)T\big(\v y\big)$ is equal to
$(1-t)r$ times the distance between
$e\big((1-t)r(\v z)\big)$ and
$e\big(r\v z\big)$, which indeed
is smaller than $\epsilon \|\v y-\v x\|_2=\epsilon r$.
\end{proof}

The rest of this section is dedicated to deriving
\LemmaRef{XZ} from \TheoremRef{continuous}.
Assume $\LipZ$ is non-empty and let $f\in\LipZ$.
Recall that every $f$ in $\LipZ$ has Lipschitz constant $M$.
By definition (see \EquationRef{Banach-density})
one can find a sequence $(\v z_r)_{r\in\N}$ in $\Z^d$
such that
\begin{equation}\label{equation:density}\limsup_{r\to\infty}
\frac{\Big|A\cap\big([-r,r)^d+\v z_r\big)\Big|}{(2r)^d}=
d^*(A).\end{equation}
One can rarefy the sequence $\big(r\big)_{r\in\N}$, to say $(r(i))_{i\in\N}$, so that the $\limsup$ in \EquationRef{density} is replaced by $\lim$.
For each $i$ let $V_i:[-1,1]^d\to[-1,1]^{d+1}$ be the map

\begin{equation}
\label{equation:Ref8}
V_i(\v x)=\frac1{Mr(i)}
\Big[f\big(\lfloor r(i)\v x\rfloor+\v z_{r(i)}\big)-f(\v z_{r(i)})\Big].
\end{equation}
One can further rarefy the sequence $(r(i))_{i\in\N}$, so that
$$T(\v x):=\lim_{i\to\infty}V_i(\v x)$$
exists for every $\v x\in [-1,1]^d\cap\Q^d$.
One can easily deduce that for any $\v x,\v y\in [-1,1]^d\cap\Q^d$ we have
\begin{equation*}
\big\|T(\v x)-T(\v y)\big\|_2\leq \|\v x-\v y\|_2.
\end{equation*}

This implies that $T$ can be extended to
$[-1,1]^d$ as a Lipschitz function with Lipschitz constant $1$.
Since $\Q^d$ is dense in $\R^d$
(and Lipschitz functions are continuous),
this extension is unique.
\begin{Lemma}
\label{lemma:Tuniformconvergence}
$V_i\to T$ uniformly on $[-1,1]^d$.
\end{Lemma}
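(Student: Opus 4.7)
The plan is to combine three observations: (i) each $V_i$ is \emph{almost} Lipschitz with constant $1$, with an error going to $0$ as $i\to\infty$; (ii) the limit $T$ is Lipschitz (hence uniformly continuous on the compact set $[-1,1]^d$); and (iii) pointwise convergence $V_i \to T$ holds on the dense subset $\Q^d\cap[-1,1]^d$. From these, a standard compactness and triangle-inequality argument yields uniform convergence.

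First I would verify the quasi-Lipschitz bound for $V_i$. Mimicking the computation sketched (but commented out) in the paper, for any $\v x,\v y\in[-1,1]^d$,
\begin{align*}
\|V_i(\v x)-V_i(\v y)\|_2
&= \tfrac{1}{Mr(i)}\bigl\|f(\lfloor r(i)\v x\rfloor+\v z_{r(i)}) - f(\lfloor r(i)\v y\rfloor+\v z_{r(i)})\bigr\|_2 \\
&\leq \tfrac{1}{r(i)}\bigl\|\lfloor r(i)\v x\rfloor - \lfloor r(i)\v y\rfloor\bigr\|_2
\leq \|\v x-\v y\|_2 + \tfrac{\sqrt{d}}{r(i)},
\end{align*}
using that $f$ has Lipschitz constant $M$ and that $\|\lfloor r(i)\v x\rfloor - r(i)\v x\|_2 \leq \sqrt{d}$ componentwise. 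This shows the family $\{V_i\}$ is equicontinuous up to the vanishing error $\sqrt{d}/r(i)$.

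Next, fix $\epsilon>0$. Since $T$ is Lipschitz with constant $1$, uniform continuity gives a $\delta>0$ (one may simply take $\delta=\epsilon/4$) such that $\|\v x-\v y\|_2<\delta$ implies $\|T(\v x)-T(\v y)\|_2<\epsilon/4$. Cover the compact set $[-1,1]^d$ by finitely many balls of radius $\delta/2$ centered at rational points $\v q_1,\ldots,\v q_N\in\Q^d\cap[-1,1]^d$. By pointwise convergence on this finite set, choose $i_0$ large enough that for all $i\geq i_0$,
\[
\max_{1\leq j\leq N}\|V_i(\v q_j)-T(\v q_j)\|_2 < \epsilon/4 \quad\text{and}\quad \tfrac{\sqrt{d}}{r(i)}<\epsilon/4.
\]
Now, for any $\v x\in[-1,1]^d$ and $i\geq i_0$, choose $j$ with $\|\v x-\v q_j\|_2<\delta/2$ and apply the triangle inequality:
\[
\|V_i(\v x)-T(\v x)\|_2 \leq \|V_i(\v x)-V_i(\v q_j)\|_2 + \|V_i(\v q_j)-T(\v q_j)\|_2 + \|T(\v q_j)-T(\v x)\|_2.
\]
Using the quasi-Lipschitz bound on the first term, each of the three summands is at most $\epsilon/4+\epsilon/4$, $\epsilon/4$, $\epsilon/4$ respectively, giving a total at most $\epsilon$ (after noting $\delta/2<\epsilon/4$).

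There is no serious obstacle here: the main content is the quasi-Lipschitz estimate, which is routine, and the rest is the textbook argument that an equicontinuous family converging pointwise on a dense set converges uniformly on a compact domain.
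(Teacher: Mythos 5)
Your proof is correct and takes essentially the same approach as the paper: establish the near-Lipschitz estimate $\|V_i(\v x)-V_i(\v y)\|_2\leq\|\v x-\v y\|_2+\sqrt{d}/r(i)$, exploit the Lipschitz property of $T$, invoke pointwise convergence on a finite (rational) net, and combine via the triangle inequality. Your version is in fact a touch more careful, noting explicitly that the net must be chosen from $\Q^d\cap[-1,1]^d$ so that pointwise convergence applies, a point the paper's terse write-up glosses over.
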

\begin{proof}
Fix $\epsilon>0$.
One can find a finite set $F\subset[-1,1]^d$
such that any $\v x\in[-1,1]^d$ satisfies
$\|\v x-\v y\|_2<\epsilon/4$ for some
$\v y=\v y(\v x)\in F$.
Let $i\in\N$ be large enough so that
$\|V_j(\v y)-T(\v y)\|_2<\epsilon/4$
for all $j\geq i$ and $\v y\in F$,
and such that $d/r(i)<\epsilon/4$.

Let $\v x\in[-1,1]^d$ be arbitrary,
let $\v y\in F$ be such that
$\|\v x-\v y\|_2<\epsilon/4M$ and let $j\geq i$.
Then
\begin{eqnarray*}\|T(\v x)-V_j(\v x)\|_2
&\leq&
\|T(\v x)-T(\v y)\|_2+\|T(\v y)-V_j(\v y)\|_2+\|V_j(\v y)-V_j(\v x)\|_2
\\&\leq&
\|\v x-\v y\|_2+\frac\epsilon4+\|\v x-\v y\|_2+\frac d{r(j)}\\
&\leq&\epsilon.
\end{eqnarray*}
\end{proof}

Next consider the sequence
$(\phi_i)_{i\in\N}$ in $L^2\big([-1,1]^d\big)$
defined by
$$
\phi_i(\v x)=
1_A\big(\lfloor r(i)\v x\rfloor+\v z_{r(i)}\big).
$$
Observe that
$$
\int\phi_id\lambda=
\frac{\bigg|A\cap\Big(\big[-r(i),r(i)\big)^d+\v z_{r(i)}\Big)\bigg|}{r(i)^d}
$$
and hence, using \EquationRef{density},
$$
\lim_{i\to\infty}\int\phi_id\lambda=
2^d d^\ast(A).
$$
Rarifying $\big(r(i)\big)_{i\in\N}$ further, if necessary,
we can assume that $\phi=\lim\phi_i$ exists
in the weak topology of $L^2$.
Then we have
$$
\int\phi d\lambda=2^d d^\ast (A).
$$
Observe that, since $\langle\phi_i,1_B\rangle\leq\mu(B)$
where $B=\{x:\phi(x)>1+\epsilon\}$ it follows that $\phi$
takes values in $[0,1]$.

The goal is to derive \LemmaRef{XZ} for
$f\in\LipZ$ (assuming $\LipZ\neq \emptyset$)
by applying \TheoremRef{continuous}
to $T\in\LipR$.
Before we can do this, we need to check that $T\in\LipR$.

\begin{Lemma}
Assume $\LipZ$ is non-empty and let $f\in\LipZ$. Let $T$ be defined by the construction above.
If $T$ is differentiable at a point
$\v x\in[-1,1]^d$, then the derivative is nonzero.
\end{Lemma}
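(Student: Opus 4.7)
The plan is to argue by contradiction: assume $T$ is differentiable at some $\v{x}_0 \in [-1,1]^d$ with $T'(\v{x}_0) = 0$, and produce a $k$-element set $X \subset A$ with $f(X)$ collinear, contradicting $f \in \LipZ$.

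First I would combine the zero-derivative hypothesis with the uniform convergence $V_i \to T$ from \LemmaRef{Tuniformconvergence} to show that $f$ is approximately constant on large lattice balls. Concretely, given $\epsilon > 0$, choose $\delta > 0$ with $\|T(\v{y}) - T(\v{x}_0)\|_2 \leq \epsilon \|\v{y} - \v{x}_0\|_2$ for $\|\v{y} - \v{x}_0\|_2 \leq \delta$, and then take $i$ large enough that $\|V_i - T\|_\infty < \epsilon\delta$. Setting $\v{a}_0 := \lfloor r(i)\v{x}_0 \rfloor + \v{z}_{r(i)}$ and unpacking the definition of $V_i$ from \EquationRef{Ref8}, the triangle inequality yields
\[
\|f(\v{a}) - f(\v{a}_0)\|_2 \leq 3\epsilon M \delta r(i) \qquad \text{for every } \v{a} \in B_i := B_\R(\v{a}_0, \delta r(i)) \cap \Z^d.
\]
In particular, $f(B_i)$ lies in a Euclidean ball of radius $3\epsilon M \delta r(i)$ in $\R^{d+1}$.

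The next step is a line-covering pigeonhole argument: the lattice points in the image ball can be covered by at most $(6\epsilon M \delta r(i) + 1)^d$ lines parallel to a fixed coordinate axis of $\Z^{d+1}$---one line for each lattice point in the orthogonal projection onto the remaining $d$ coordinates. Since $|B_i|$ is comparable to $(\delta r(i))^d$, this is $O((\epsilon M)^d |B_i|)$ lines. Hence, if one can secure a lower bound $|A \cap B_i| \geq \alpha |B_i|$ for some $\alpha > 0$ independent of $\epsilon$, then for $\epsilon$ sufficiently small the pigeonhole principle produces at least $k$ elements of $A \cap B_i$ whose $f$-images lie on a single line, giving the required contradiction.

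Producing this density bound is the main obstacle. The natural route is via the weak $L^2$-limit $\phi$: if $\v{x}_0$ happens to be a Lebesgue point of $\phi$ with $\phi(\v{x}_0) > 0$, then the weak convergence $\phi_i \to \phi$ combined with Lebesgue's differentiation theorem yields $|A \cap B_i| / |B_i| \to \phi(\v{x}_0) > 0$ as $i \to \infty$ and $\delta \to 0$, delivering $\alpha = \phi(\v{x}_0)/2$. The delicate case is a differentiability point $\v{x}_0$ at which $\phi(\v{x}_0) = 0$: here I would exploit the propagation of flatness---namely, that $T$ being $\epsilon$-flat at $\v{x}_0$ forces, by the triangle inequality, $T$ on $B_\R(\v{x}', \delta/2)$ to be concentrated in a ball of radius $O(\epsilon\delta)$ for every $\v{x}' \in B_\R(\v{x}_0, \delta/2)$---so that $\v{x}_0$ may be replaced by a nearby Lebesgue point of $\phi$ in the support of $\phi$. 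Such points exist since $\int \phi \, d\lambda = 2^d d^\ast(A) > 0$, but guaranteeing that one lies inside $B_\R(\v{x}_0, \delta/2)$ for a $\delta$ still compatible with the flatness estimate is the most technical point of the proof.
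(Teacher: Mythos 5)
Your proposal diverges from the paper's argument at the crucial counting step, and the divergence is worth dwelling on. The paper's proof never mentions $A$, $\phi$, or $\phi_i$: after showing $f$ maps the lattice ball $B_i=\v z_{r(i)}+r(i)B_\R(\v x_0,\delta)\cap\Z^d$ into $B_\Z(\v u,3Mr(i)\epsilon\delta)$, it covers that image ball by $\lesssim(3M\epsilon\delta r(i))^d$ parallel lines and then asserts that ``each line in $\Z^{d+1}$ contains the image (under $f$) of at most $k$ points,'' pitting $|B_i|\gtrsim(\delta r(i))^d$ against $k$ times the number of lines. You instead insist, correctly, that to contradict $f\in\LipZ$ the $k$ collinear points must come from $A$, so one needs the lower bound $|A\cap B_i|\geq\alpha|B_i|$; the paper's version, read literally, bounds each line by $k$ preimages \emph{in all of $\Z^d$}, which is not what membership in $\LipZ$ gives. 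So you are not merely reproving the same thing differently --- you are supplying the density input the paper's proof tacitly skips.

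Your case (a) is sound: at a Lebesgue point $\v x_0$ of $\phi$ with $\phi(\v x_0)>0$, weak $L^2$ convergence $\phi_i\rightharpoonup\phi$ plus the Lebesgue differentiation theorem give $|A\cap B_i|/|B_i|\to\phi(\v x_0)$ as $\delta\to 0$ and $i\to\infty$, and since $\epsilon$ is the last parameter chosen (depending on $d,k,M$ and $\phi(\v x_0)$), the pigeonhole then yields $\geq k$ points of $A$ with collinear image, a contradiction.

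Your case (b), however, has a genuine gap, and it is not just ``the most technical point'' --- it cannot be carried out as described. If $\phi(\v x_0)=0$, nothing prevents $\phi$ from vanishing identically on an entire neighbourhood of $\v x_0$, so there need not exist \emph{any} Lebesgue point $\v x'$ of $\phi$ with $\phi(\v x')>0$ inside $B_\R(\v x_0,\delta/2)$. Indeed, for very sparse $A$ the function $f$ can be chosen essentially constant on large blocks of the lattice disjoint from $A$, making $T$ genuinely flat there; at such $\v x_0$ the conclusion you (and the lemma, as stated) are trying to prove is not accessible by this route, and a priori might even fail. What rescues the overall argument is that the lemma is only ever invoked, in the proof of \TheoremRef{continuous}, at a point $\v x$ that is simultaneously a differentiability point of $T$ and a Lebesgue point of $\phi$ with $\phi(\v x)>0$. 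At such points your case (a) suffices. The clean fix is therefore to weaken the statement (and correspondingly the definition of $\LipR$) to ``the derivative is nonzero at every differentiability point of $T$ that is also a Lebesgue point of $\phi$ with $\phi>0$,'' and drop case (b) altogether, rather than trying to propagate flatness to a nearby Lebesgue point whose existence you cannot guarantee.
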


\begin{proof}
Assume, for the sake of a contradiction,
that $T$ is differentiable at a point
$\v x\in[-1,1]^d$ and that the derivative is $0$.
Let $\epsilon>0$ to be determined later and
find $\delta>0$ such that whenever
$\|\v y-\v x\|_2<\delta$ we have
$\|T(\v y)-T(\v x)\|_2<\epsilon\|\v y-\v x\|_2
\leq\epsilon\delta$.
Choose $i$ large enough so that
$\|V_i(\v y)-T(\v y)\|_2<\epsilon\delta$
for any $\v y\in[-1,1]^d$ and let
$$
\v u:=Mr(i)V_i(\v x)+f(\v z_{r(i)})=
f\big(\lfloor r(i)\v x\rfloor+\v z_{r(i)}\big)\in\Z^{d+1}
$$
Now take
$\v v\in\v z_{r(i)}+r(i)B_\R(\v x,\delta)\cap\Z^d$
and let $\v y=(\v v-\v z_{r(i)})/r(i)$.
We have
\begin{eqnarray*}
\|f(\v v)-\v u\|_2&=&Mr(i)\|V_i(\v y)-V_i(\v x)\|_2\\
&\leq&
Mr(i)\Big(\|T(\v y)-T(\v x)\|_2+\|V_i(\v y)-T(\v y)\|_2+\|T(x)-V_i(\v x)\|_2\Big)\\
&\leq&
Mr(i)(\epsilon\delta+2\epsilon\delta)=3Mr(i)\epsilon\delta.
\end{eqnarray*}
We just showed that
$f\big(\v z_{r(i)}+r(i)B_\R(\v x,\delta)\cap\Z^d\big)
\subset B_\Z\big(\v u,3Mr(i)\epsilon\delta\big)$.
On the one hand,
$$
\big|\v z_{r(i)}+r(i)B_\R(\v x,\delta)\cap\Z^d\big|
\geq \constantthree \big(\delta r(i)\big)^d
$$
for some $\constantthree>0$ that only
depends on the dimension $d$.
On the other hand, the ball
$B_\Z\big(\v u,3Mr(i)\epsilon\delta\big)$ can be
covered with no more than
$\constantfour\big(3Mr(i)\epsilon\delta\big)^d$
vertical lines, for some other constant
$\constantfour>0$ that only depends on the dimension $d$.
Since each line in $\Z^{d+1}$ contains the image
(under $f$) of at most $k$ points, we deduce that
$$
\constantthree\big(\delta r(i)\big)^d
\leq k\constantfour\big(3Mr(i)\epsilon\delta\big)^d.
$$
Rearranging, we get
$\epsilon\geq\sqrt[d]{C_1/(3^dM^dkC_2)}$, so, by choosing $\epsilon$
small enough (depending only on $k$, $M$ and $d$),
we obtain the desired contradiction.\end{proof}

Now, we can apply \TheoremRef{continuous} to $T=\lim V_i$
and $\phi=\lim\phi_i$ in order to find
$\constantone'>0$, $\v x\in[-1,1]^d$ and $\v w\in S^d$ such
that for every sufficiently small $\epsilon'>0$ the set
$\XR{T}{\epsilon'}{\constantone'}{\v w}{\v x}$ is nonempty.
We will show that $\XZ{f}{\epsilon}{\constantone}{\v w}$
is non-empty for $\constantone:=\frac{\constantone'}{4^d}$ and
$\epsilon:=\epsilon'/2$. To prove this claim, take any $\v y\in\XR{T}{\epsilon'}{\constantone'}{\v w}{\v x}$ and
put $\constantfive:=\epsilon\|\v x-\v y\|_2/16$.
In view of \LemmaRef{Tuniformconvergence} we can find
$i\in\N$ large enough so that the following three conditions are satisfied:
\begin{enumerate}
[label=(i-\arabic*)~~~~,ref=(i-\arabic*)]
  \item\label{item:i1}
  $\left\|\frac{V_i(\v x)-V_i(\v y)}{\left\|V_i(\v x)-V_i(\v y)\right\|_2}-\frac{T(\v x)-T(\v y)}{\left\|T(\v x)-T(\v y)\right\|_2}\right\|_2<\frac\epsilon2$;
  \item\label{item:i2}
  $\|V_i(\v z)-T(\v z)\|_2<\constantfive, \qquad\forall\v z\in[-1,1]^d$;
  \item\label{item:i3}
  $r(i)>\frac{\sqrt{d}}{\eta}$.
\end{enumerate}
Let $\ell$ be the generalized line segment defined by
$\ell(0)=\lfloor r(i)\v x\rfloor+\v z_{r(i)}$ and
$\ell(1)=\lfloor r(i)\v y\rfloor+\v z_{r(i)}$.
The proof of \LemmaRef{XZ} will be completed with the following lemma.
\begin{Lemma}
$\ell\in\XZ{f}{\epsilon}{\constantone}{\v w}$.
\end{Lemma}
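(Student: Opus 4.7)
The plan is to verify each of conditions \ref{item:z-i}--\ref{item:z-iii} of \DefinitionRef{XZ} together with the size requirement $\epsilon m_\ell > 14\sqrt d$, using as the main bridge the identity
\[
f(\ell(t)) = Mr(i)\,V_i\big(\tilde\ell(t)/r(i)\big) + f(\v z_{r(i)}),
\]
where $\tilde\ell(t) := \ell(t) - \v z_{r(i)} \in \Z^d$. Since $\tilde\ell(t)/r(i)$ differs from $(1-t)\v x + t\v y \in [-1,1]^d$ by at most $2\sqrt d/r(i)$, and since \ref{item:i3} gives $\sqrt d/r(i) < \constantfive$ while \ref{item:i2} gives $\|V_i(\v z) - T(\v z)\|_2 < \constantfive$ uniformly in $\v z$, every occurrence of $V_i$ at a point $\tilde\ell(s)/r(i)$ may be traded for $T((1-s)\v x + s\v y)$ at additive cost controlled by $\constantfive = \epsilon\|\v x - \v y\|_2/16$. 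The continuous conditions inherited by $\v y$ as an element of $\XR{T}{\epsilon'}{\constantone'}{\v w}{\v x}$ can then be pushed through to the discrete setting.

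The size bound follows from $m_\ell \geq r(i)\|\v y - \v x\|_2 - 2\sqrt d$ combined with \ref{item:i3}. For \ref{item:z-i}, the identity above reduces the task to comparing the direction of $V_i(\tilde\ell(1)/r(i)) - V_i(\tilde\ell(0)/r(i))$ to $\v w$; one uses \ref{item:i2} to pass to $V_i$ at the points $\v x, \v y$, then \ref{item:i1} to pass to the direction of $T(\v y) - T(\v x)$, and finally (r-i) to reach $\v w$, with the triangle inequality (and a suitably small choice of $\epsilon'$ relative to $\epsilon$) yielding $\|\vell - \v w\|_2 < \epsilon$. For \ref{item:z-ii}, one decomposes
\[
f(\ell(t)) - (1-t)f(\ell(0)) - tf(\ell(1)) = Mr(i)\bigl[V_i(\v p_t) - (1-t)V_i(\v p_0) - tV_i(\v p_1)\bigr],
\]
where $\v p_s := \tilde\ell(s)/r(i)$, replaces each $V_i(\v p_s)$ by $T((1-s)\v x + s\v y)$ at total cost $O(\constantfive)$, and controls the remaining $T$-expression via (r-ii) by $\epsilon'\|\v y - \v x\|_2$. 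Since $m_\ell \approx r(i)\|\v y - \v x\|_2$ and $\constantfive$ is a small multiple of $\epsilon\|\v x - \v y\|_2$, the resulting bound fits comfortably inside $\epsilon M m_\ell$.

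The substantive obstacle is \ref{item:z-iii}. Under the rescaling $\v z \mapsto (\v z - \v z_{r(i)})/r(i)$, the discrete cylinder $\KZ$ corresponds to a region $\KR_i \subset [-1,1]^d$ consisting of one cube of side $1/r(i)$ per lattice point of $\KZ$; by construction $|\KZ| = r(i)^d\,\lambda(\KR_i)$ and $|A \cap \KZ| = r(i)^d \int_{\KR_i}\phi_i\,d\lambda$, so the discrete density equals the average of $\phi_i$ over $\KR_i$. Since $\KR_i$ converges in measure to a continuous cylinder around the segment from $\v x$ to $\v y$ of radius $\approx \epsilon\|\v y - \v x\|_2$, and since $\phi_i \rightharpoonup \phi$ weakly in $L^2$, for $i$ large enough (possibly larger than was needed in \ref{item:i1}--\ref{item:i3}) the average of $\phi_i$ over $\KR_i$ is close to the average of $\phi$ over the corresponding continuous cylinder. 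Condition (r-iii) supplies the lower bound $\constantone'$ on the average of $\phi$ over $\KR(\epsilon',\v x,\v y)$; once the volume discrepancy between this cylinder and the one $\KR_i$ limits to is absorbed, along with the remaining boundary and lattice losses, one reaches the required density bound $|A \cap \KZ| > \constantone |\KZ|$ with the prescribed $\constantone = \constantone'/4^d$.
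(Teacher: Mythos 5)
Your arguments for the size bound, \ref{item:z-i} and \ref{item:z-ii} match the paper: in each case the key move is the identity $f(\ell(t)) = Mr(i)V_i\big(\tilde\ell(t)/r(i)\big) + f(\v z_{r(i)})$, the $O(\sqrt d/r(i))$ lattice error from the floor functions, the uniform closeness \ref{item:i2} of $V_i$ to $T$, and then \ref{item:i1} plus (r-i), (r-ii) to land on the required bounds. (Incidentally, the paper's stated choice $\epsilon=\epsilon'/2$ is inconsistent with its own chain of inequalities for \ref{item:z-i} and with the inclusion it later needs; the arithmetic only closes with $\epsilon$ a constant multiple of $\epsilon'$ on the \emph{larger} side, e.g.\ $\epsilon=2\epsilon'$.)

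Where your proposal diverges is \ref{item:z-iii}, and here there is a genuine gap. You correctly set up the rescaling $g_i:\v u\mapsto\lfloor r(i)\v u\rfloor+\v z_{r(i)}$ with $|\KZ|=r(i)^d\lambda(g_i^{-1}(\KZ))$ and $|A\cap\KZ|=r(i)^d\int_{g_i^{-1}(\KZ)}\phi_i\,d\lambda$, so the discrete density is the $\phi_i$-average over $g_i^{-1}(\KZ)$. But ``$g_i^{-1}(\KZ)$ converges in measure to a cylinder, absorb the discrepancies'' is not a proof: the region $g_i^{-1}(\KZ)$ depends on the very same $i$ you are trying to fix, and the strict inequality in \ref{item:z-iii} must survive simultaneous control of (a) the difference between $g_i^{-1}(\KZ)$ and a continuous cylinder, (b) the ratio of volumes between the cylinder you have and the one (r-iii) speaks about, and (c) the error $\int(\phi_i-\phi)\,\mathbf 1_{\KR}$. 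The paper resolves (a) and (b) at one stroke by proving the two explicit inclusions $\KR(\epsilon',\v x,\v y)\subset g_i^{-1}(\KZ)\subset\KR(4\epsilon',\v x,\v y)$, so that
\[
\frac{|A\cap\KZ|}{|\KZ|}\ \geq\ \frac{\int_{\KR(\epsilon',\v x,\v y)}\phi_i\,d\lambda}{\lambda\big(\KR(4\epsilon',\v x,\v y)\big)},
\]
and (r-iii), after replacing $\phi_i$ by $\phi$, gives the factor $\constantone'$ against the $4^d$ volume ratio. These inclusions are the quantitative content of your ``convergence in measure'' claim and need to be stated and proved; without them, the discrepancy bookkeeping you gesture at is not justified. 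Your explicit remark that $i$ may need to be enlarged beyond what \ref{item:i1}--\ref{item:i3} require, in order to invoke the weak $L^2$ convergence $\phi_i\rightharpoonup\phi$ for point (c), is correct and is in fact a step the paper itself leaves implicit; but flagging it does not substitute for the inclusions.
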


\begin{proof}
First notice that $m_\ell=\big\|\lfloor r(i)\v x\rfloor-\lfloor r(i)\v y\rfloor\big\|_2\geq r(i)\|\v x-\v y\|_2-2\sqrt{d}$, and hence by condition \ref{item:i3} we have $\epsilon m_\ell>16\sqrt{d}-2\epsilon\sqrt{d}\geq14\sqrt{d}$.
By equation \EquationRef{Ref8} we have $\fcircell(0)=r(i)MV_i(\v x)+f(\v z_{r(i)})$ and
$\fcircell(1)=r(i)MV_i(\v y)+f(\v z_{r(i)})$.
Therefore,
\begin{eqnarray*}
\left\|\frac{\fcircell(1)-\fcircell(0)}{\Big\|\fcircell(1)-\fcircell(0)\Big\|_2}-\v w\right\|_2
&=&
\left\|\frac{r(i)MV_i(\v x)-r(i)MV_i(\v y)}{\Big\|r(i)MV_i(\v x)-r(i)MV_i(\v y)\Big\|_2}-\v w\right\|_2\\
&=&
\left\|\frac{V_i(\v x)-V_i(\v y)}{\Big\|V_i(\v x)-V_i(\v y)\Big\|_2}-\v w\right\|_2\\
&\leq&
\frac\epsilon2+\left\|\frac{T(\v x)-T(\v y)}{\Big\|T(\v x)-T(\v y)\Big\|_2}-\v w\right\|_2\\
&\leq&\epsilon,
\end{eqnarray*}
where the first inequality follows from \ref{item:i1}.
This proves \ref{item:z-i}. For the second condition, \ref{item:z-ii},
let $t\in[0,1]$ and observe that
on the one hand
$$\ell(t)=\lfloor(1-t)\ell(0)+t\ell(1)\rfloor=
\big\lfloor(1-t)\lfloor r(i)\v x\rfloor+t\lfloor r(i)\v y\rfloor\big\rfloor+\v z_{r(i)}$$
which implies that
\begin{eqnarray*}
\fcircell(t)&=&r(i)MV_i\left(\frac{(1-t)\lfloor r(i)\v x\rfloor+t\lfloor r(i)\v y\rfloor}{r(i)}\right)
+f(\v z_{r(i)})\\&=&r(i)MV_i\big((1-t)\v x+t\v y\big)+f(\v z_{r(i)})+\v e
\end{eqnarray*}
for some $\v e$ with $\|\v e\|_\infty\leq2M\sqrt{d}$.
On the other hand,
$$(1-t)\fcircell(0)+t\fcircell(1)=(1-t)r(i)MV_i(\v x)+tr(i)MV_i(\v y)+f(\v z_{r(i)})$$
By combining both (and by using \ref{item:i2} and \ref{item:i3})
we get
\begin{eqnarray*}
&&\left\|\fcircell(t)-\Big[(1-t)\fcircell(0)+t\fcircell(1)\Big]\right\|_2\\
&=&
r(i)M\left\|V_i\big((1-t)\v x+t\v y\big)-\big[(1-t)V_i(\v x)+
tV_i(\v y)\big]+\frac{\v e}{r(i)M}\right\|_2\\
&\leq&
r(i)M\left(\Big\|T\big((1-t)\v x+t\v y\big)-\big[(1-t)T(\v x)+tT(\v y)\big]\Big\|_2+2\eta+\frac{2\sqrt{d}}{r(i)}\right)
\\&\leq&r(i)M\big(\epsilon'\|\v x-\v y\|_2+2\eta+2\sqrt{d}/r(i)\big)\leq\epsilon M\frac34 r(i)\|\v x-\v y\|_2.
\end{eqnarray*}
Also
$$
m_\ell=\|\ell(1)-\ell(0)\|_2=
\big\|\lfloor r(i)\v y\rfloor-\lfloor r(i)\v x\rfloor\big\|_2\geq
r(i)\|\v x-\v y\|_2-\sqrt{d}>\frac34 r(i)\|\v x-\v y\|_2
$$
and this finishes the proof of the second condition.

Finally, we prove the third condition, \ref{item:z-iii}.
Let $g_i:\v u\mapsto\lfloor r(i)\v u\rfloor+\v z_{r(i)}$.
For a set $U\subset g_i([-1,1]^d)$ we have
$$|U|=r(i)^d\lambda(g_i^{-1}(U))\quad\text{ and }\quad |A\cap U|=r(i)^d \int_{g_i^{-1}(U)}\phi_id\lambda.$$
We wish to apply these two facts to $U=\KZ=\KZ(\epsilon,\ell)$ (as in \DefinitionRef{XZ}).
The idea is to approximate $g_i^{-1}(\KZ)$ with $\KR(\epsilon,\v x,\v y)$.
Now let $\KZ=\KZ(\epsilon,\ell)$ and $\KR=\KR(\epsilon',\v x,\v y)$ be as in and \DefinitionRef{XR}.
For any $\v u\in\KR$ there is some $t\in[0,1]$ such that
$$\left\|\v u-(1-t)\v x-t\v y\right\|_2\leq\epsilon'\|\v x-\v y\|_2.$$
It follows from \ref{item:i2} that
$$\Big\|g_i(\v u)-\big\lfloor(1-t)g_i(\v x)+tg_i(\v y)\big\rfloor\Big\|_2\leq r(i)\epsilon'\|\v x-\v y\|_2+2\sqrt{d}\leq\epsilon m_\ell$$
This implies that $\KR\subset g_i^{-1}(\KZ)$.
Similarly, one can show that $g_i^{-1}\big(\KZ(\epsilon,\ell)\big)\subset\KR(4\epsilon',\v x,\v y)$.
Therefore we conclude that
$$\frac{|A\cap\KZ|}{|\KZ|}\geq
\frac{r(i)^d \int_\KR \phi_i d\lambda }{\lambda(\KR\big(4\epsilon',\v x,\v y)\big)r(i)^d}>
\frac{\constantone'}{4^d}=\constantone.$$
This finishes the proof.
\end{proof}

\section{Proof of \TheoremRef{A} using \LemmaRef{XZ}}\label{section:Conclusion}

Assume $\LipZ\neq \emptyset$ and let $f\in\LipZ$. Let $\delta>0$ and $\v w\in S^d$ be given by \LemmaRef{XZ}.
We will assume without loss of generality that the first coordinate $w_1$ of $\v w$ has the highest absolute value. Since $\|\v w\|_2=1$, this implies that
\begin{equation}
\label{equation:lowerboundw1}
|w_1|\geq d^{-1/2}.
\end{equation}
We will need the following form of Dirichlet's approximation theorem.
\begin{Lemma}\label{lemma:dirichlet}
Let $(u_2,u_3,\dots,u_{d+1})\in\R^d$ and let $N\in\N$.
Then there exists a positive integer $b\leq N^d$ and
$a_2,a_3,\dots,a_{d+1}\in\Z$ such that
$$\left|u_l-\frac{a_l}b\right|\leq\frac1{bN}\qquad\forall l\in\{2,3,\dots,d+1\}.$$
\end{Lemma}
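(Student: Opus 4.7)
The plan is to prove this by a standard pigeonhole argument in the spirit of Dirichlet's original approach to simultaneous Diophantine approximation. The idea is to consider, for each integer $n$ in a suitably chosen range, the tuple of fractional parts of $nu_2,\dots,nu_{d+1}$ as a point in the unit cube $[0,1)^d$, and then partition this cube into small boxes so that two values of $n$ must land in the same box. The difference of these two values of $n$ will serve as the desired denominator $b$.

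Concretely, I would consider the $N^d+1$ points $\v p_n=(\{nu_2\},\{nu_3\},\dots,\{nu_{d+1}\})\in[0,1)^d$ for $n=0,1,\dots,N^d$, and partition $[0,1)^d$ into the $N^d$ half-open sub-cubes of side length $1/N$ obtained by chopping each coordinate axis into $N$ equal intervals. By the pigeonhole principle, there exist integers $0\leq n_1<n_2\leq N^d$ with $\v p_{n_1}$ and $\v p_{n_2}$ lying in the same sub-cube, which gives $|\{n_2 u_l\}-\{n_1 u_l\}|\leq 1/N$ for every $l\in\{2,\dots,d+1\}$. Setting $b=n_2-n_1$, we have $1\leq b\leq N^d$, and for each $l$ the quantity $bu_l$ lies within $1/N$ of some integer $a_l$, i.e.\ $|bu_l-a_l|\leq 1/N$, which is the desired inequality $|u_l-a_l/b|\leq 1/(bN)$ after dividing by $b$.

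There is no substantive obstacle here; the only minor care required is to make sure the partition of $[0,1)^d$ is chosen so that every point lies in exactly one sub-cube (hence the use of half-open intervals), and that we pick the range $n=0,1,\dots,N^d$ so that the pigeonhole count gives $N^d+1$ points distributed among $N^d$ boxes. Once this bookkeeping is in place, the conclusion is immediate.
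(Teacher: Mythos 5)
Your pigeonhole argument is correct and is essentially the same proof the paper uses (the paper's proof is present in the source but suppressed behind an \texttt{\textbackslash iffalse}): it considers the $N^d+1$ points $0,\alpha,\dots,N^d\alpha$ of $\mathbf{T}^d=\R^d/\Z^d$ with $\alpha$ the image of $(u_2,\dots,u_{d+1})$, partitions the torus into $N^d$ cubes of side $1/N$, and sets $b$ to be the difference of two indices landing in the same cube. The only cosmetic difference is that the paper takes $a_l=\lfloor bu_l\rfloor$, whereas you take $a_l=\lfloor n_2u_l\rfloor-\lfloor n_1u_l\rfloor$; both work.
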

For the remainder of this section let $N$ be any positive integer satisfying $N>kc_2/(\delta c_3)$, where $c_2$ is the constant appearing on Lemma \ref{lemma:Enorm} and $c_3$ is the constant appearing in Lemma 
\ref{lem:2ndrev}, both depending only on the fixed parameters $d$ and $M$. Also, assume that
$N$ is large enough such that $\XZ{f,A}{\epsilon}{\delta}{\v w}\neq\emptyset$ for all $\epsilon\leq\frac1N$,
as guaranteed by \LemmaRef{XZ}.
Apply
\LemmaRef{dirichlet} to find $b,a_2,\dots,a_{d+1}\in\Z$ satisfying
\begin{equation}
\label{equation:conclusion3}
\left|\frac{w_l}{w_1}-\frac{a_l}b\right|\leq\frac1{bN}\qquad\forall\,l\in\{2,3,\dots,d+1\}.
\end{equation}
Finally let $\epsilon=\frac1{bN}$ and take some generalized line
segment $\ell\in\XZ{f,A}{\epsilon}{\delta}{\v w}$.

We need a lower bound on the cardinality of $\KZ$.
\begin{Lemma}\label{lem:2ndrev}
There exists a constant $\constantseven$ that only depends on the dimension $d$ such that for any generalized line segment $\ell$ and any $\epsilon>0$ satisfying $\epsilon m_\ell>14\sqrt{d}$, the cylinder $\KZ$ has cardinality
$|\KZ|\geq \constantseven \epsilon^{d-1} m_\ell^d$.
\end{Lemma}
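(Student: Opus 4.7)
The plan is to compare the discrete cylinder $\KZ$ with the volume of a slightly shrunken continuous cylinder around the straight line segment from $\ell(0)$ to $\ell(1)$. The basic intuition is that a cylinder in $\R^d$ of length $m_\ell$ and cross-sectional radius $\epsilon m_\ell$ has Lebesgue volume of order $\epsilon^{d-1} m_\ell^d$, and the condition $\epsilon m_\ell > 14\sqrt d$ guarantees that the cross-section is thick enough that rounding to the integer lattice loses only a constant factor.

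The key geometric input is that every point $\ell(t)$ lies close to the straight line segment $L$ joining $\ell(0)$ and $\ell(1)$. Writing $\ell(t)=\lfloor(1-t)\v x+t\v y\rfloor$, a short estimate using $\|\ell(0)-\v x\|_2,\|\ell(1)-\v y\|_2\leq\sqrt d$ gives
\[
\bigl\|\ell(t)-\bigl((1-t)\ell(0)+t\ell(1)\bigr)\bigr\|_2\leq 2\sqrt d\qquad\text{for every }t\in[0,1].
\]
Thus every point of $L$ lies within $2\sqrt d$ of a point of $\ell([0,1])$.

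Given this, I define the continuous fat tube
\[
\tilde K_\R=L+B_\R(\v 0,\epsilon m_\ell-3\sqrt d),
\]
which makes sense because $\epsilon m_\ell>14\sqrt d$ forces the radius $r:=\epsilon m_\ell-3\sqrt d$ to satisfy $r>\tfrac{11}{14}\epsilon m_\ell>0$. I then claim that the floor map sends $\tilde K_\R$ into $\KZ$: for any $\v u\in\tilde K_\R$ there is $t\in[0,1]$ with $\|\v u-((1-t)\ell(0)+t\ell(1))\|_2\leq r$, and combining this with $\|\lfloor\v u\rfloor-\v u\|_2\leq\sqrt d$ and the $2\sqrt d$ estimate above yields $\|\lfloor\v u\rfloor-\ell(t)\|_2\leq\epsilon m_\ell$, so $\lfloor\v u\rfloor\in\KZ$. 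Since the preimage of each lattice point under the floor map is a unit cube, this gives
\[
|\KZ|\geq\bigl|\lfloor\tilde K_\R\rfloor\bigr|\geq\lambda(\tilde K_\R).
\]

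Finally, $\tilde K_\R$ contains the straight cylinder of length $m_\ell$ and radius $r$ over $L$, so $\lambda(\tilde K_\R)\geq\omega_{d-1}\,r^{d-1}m_\ell$ where $\omega_{d-1}$ is the volume of the unit ball in $\R^{d-1}$. Using $r>\tfrac{11}{14}\epsilon m_\ell$ gives
\[
|\KZ|\geq\omega_{d-1}\Bigl(\frac{11}{14}\Bigr)^{d-1}\epsilon^{d-1}m_\ell^{\,d},
\]
which is the desired bound with $\constantseven=\omega_{d-1}(11/14)^{d-1}$. The only part requiring care is the bookkeeping that relates the three objects $L$, $\ell([0,1])$, and $\tilde K_\R$; this is precisely where the quantitative hypothesis $\epsilon m_\ell>14\sqrt d$ is used, ensuring the shrinkage by $3\sqrt d$ is absorbed into a dimension-only constant rather than degrading the exponents.
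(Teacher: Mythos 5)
Your proof is correct and follows essentially the same strategy as the paper: lower-bound $|\KZ|$ by the Lebesgue measure of a shrunken continuous cylinder around the straight segment from $\ell(0)$ to $\ell(1)$, using the hypothesis $\epsilon m_\ell>14\sqrt d$ to absorb the $O(\sqrt d)$ rounding losses into a dimension-only constant. (The paper shrinks the radius by $2\sqrt d$ rather than your $3\sqrt d$, since $\|\gamma(t)-\ell(t)\|_2<\sqrt d$ componentwise rather than the cruder $2\sqrt d$ you used, but with the generous $14\sqrt d$ cushion either bookkeeping works.)
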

\begin{proof}
Define $\tilde{\KZ}:=\{\vec z \in\R^d: \lfloor{\vec z}\rfloor\in\KZ\}$. Observe that $\lambda(\tilde{\KZ})=|\KZ|$,
hence it suffices to show that $\lambda(\tilde{\KZ})\geq \constantseven \epsilon^{d-1} m_\ell^d$.
Let $$\gamma(t):=(1-t)\ell(0)+t\ell(1),\qquad\forall t\in[0,1].$$
If $\vec z\in\R^d$ satisfies $\min_{t\in[0,1]}\|\vec z - \gamma(t)\|\leq \epsilon m_\ell-2\sqrt{d}$,
then a simple application of the triangle inequality implies that
$\min_{t\in[0,1]}\|\lfloor\vec z\rfloor - \ell(t)\|\leq \epsilon m_\ell$. In other words this shows that $\tilde{\KZ}$
contains the cylinder with axes $\gamma(t)$ and radius $\epsilon m_\ell-2\sqrt{d}$. Since we assume that
$\epsilon m_\ell>14\sqrt{d}$ it is guaranteed that $\epsilon m_\ell-2\sqrt{d}>0$. This allows us to bound the size of
$\lambda(\tilde{\KZ})$ from below by a constant multiple of $(\epsilon m_\ell-2\sqrt{d})^{d-1}m_\ell\leq\big(\frac{\epsilon m_\ell}{2}\big)^{d-1}m_\ell$. Thus,
$$
\lambda(\tilde{\KZ})\geq \constantseven \epsilon^{d-1} m_\ell^d
$$
for some $\constantseven$ that only depends on $d$.
\end{proof}

The idea is to cover $f(\KZ)$ with at most $\delta |\KZ| /k$ lines, where
$\KZ=\KZ(\epsilon,\ell)$ is as in \DefinitionRef{XZ}.
Due to
\ref{item:z-iii} we have that $|\KZ\cap A|/|\KZ|>\delta$, therefore this will imply that there exists $X\subset\KZ\cap A$ with $|X|=k$ and such that the image $f(X)$ is contained in a line.

We can assume without loss of generality that
$\fcircell(0)=0$, as otherwise we can instead cover
the set $f(\KZ)-\fcircell(0)$ with less than $\delta |\KZ| /k$ lines and
this would then yield a covering of $f(\KZ)$ with the same number of lines.

Define $\v s=(b,a_2,a_3,\ldots,a_{d+1})\in \Z^{d+1}$ and let $E$ be the set of all lines in $\R^{d+1}$ of the form
$\{\v x-t \v s:t\in\R\}$ for some $\v x\in f(\KZ)$.
Thus the set $E$ covers all points in $f(\KZ)$.

\begin{Lemma}
\label{lemma:Enorm}
There exists a constant $c_2$, depending only on the dimension $d$ and on the Lipschitz constant $M$, such that
$$|E|\leq c_2\frac{m_\ell^d}{b^{d-1}N^d}.$$
\end{Lemma}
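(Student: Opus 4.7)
My strategy is to identify $E$ with the image of $f(\KZ)$ under an explicit integer-linear map whose kernel is $\R\v s$, and then to bound the size of this image by a lattice-point count in a thin box. The map in question is $\sigma\colon\R^{d+1}\to\R^d$ defined by $\sigma(\v x)=(bx_l - a_l x_1)_{l=2}^{d+1}$, which visibly sends $\v s$ to $\v 0$; conversely, if $\sigma(\v x)=\sigma(\v y)$ with $\v x,\v y\in\Z^{d+1}$, then $\v y-\v x$ is a scalar multiple of $\v s$. Thus $|E| = |\sigma(f(\KZ))|$. Since $\sigma(\v e_j)=b\v e_{j-1}$ for $j\geq 2$, the image lattice $\sigma(\Z^{d+1})$ contains $b\Z^d$ with index $b/\gcd(b,a_2,\dots,a_{d+1})\leq b$.

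Next I localize $\sigma(f(\KZ))$. Using condition \ref{item:z-ii}, the Lipschitz property of $f$, and the normalization $\fcircell(0)=\v 0$ imposed at the beginning of this section, every $\v x\in f(\KZ)$ admits a decomposition $\v x = s\vell + \v u$ with $s\in[0, M\mell]$ and $\|\v u\|_2\leq 2M\epsilon\mell$. The Dirichlet bound \eqref{equation:conclusion3} together with $|w_1|\geq 1/\sqrt d$ gives $|bw_l - a_l w_1|\leq 1/N$ and $|a_l|\leq b(\sqrt d+\epsilon)$; combining these with condition \ref{item:z-i} yields
\[
\|\sigma(\vell)\|_\infty\leq\frac{2+\sqrt d+\epsilon}{N},\qquad
\|\sigma(\v u)\|_\infty\leq\frac{2M\mell(1+\sqrt d+\epsilon)}{N}.
\]
By linearity, $\sigma(f(\KZ))$ is contained in a cube of side length at most $S:=C_d M\mell/N$ centered at the origin, where $C_d$ depends only on $d$.

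Finally I count lattice points. Tiling $\R^d$ by translates of the fundamental domain $[0,b)^d$ of $b\Z^d$, each tile meets $\sigma(\Z^{d+1})$ in at most $b$ points by the index computation above, and at most $(S/b+2)^d$ of these tiles can meet a cube of side $S$, so
\[
|E|\leq (S/b+2)^d\cdot b \;=\; \frac{(S+2b)^d}{b^{d-1}}.
\]
The hypothesis $\epsilon\mell>14\sqrt d$ in \DefinitionRef{XZ} rearranges to $b<\mell/(14\sqrt d\,N)$, so $S+2b \leq C'_{d,M}\,\mell/N$ for a constant $C'_{d,M}$ depending only on $d$ and $M$; substituting gives the claimed bound $|E|\leq c_2\,\mell^d/(b^{d-1}N^d)$. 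The main point is that the lower bound on $\epsilon\mell$ is exactly what keeps $b$ small relative to $S$, so that the tile-count gains the crucial factor of $b^{d-1}$; without it the cube in the second step could be smaller than a fundamental domain of $b\Z^d$ and this argument would collapse.
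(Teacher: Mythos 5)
Your proof is correct and takes essentially the same approach as the paper: both project $f(\KZ)$ onto $\R^d$ along $\v s$ (your integer-linear $\sigma=bP_{\v s}$ is just a rescaling of the paper's rational projection $P_{\v s}$), observe that the image sits inside a lattice of covolume on the order of $b^{d-1}$, confine it to a region of diameter $O(\mell/N)$ using conditions \ref{item:z-i}, \ref{item:z-ii} and the Dirichlet estimate, and count lattice points. The only notable refinement is that your cube-tiling bound $(S+2b)^d/b^{d-1}$, with the $+2b$ term controlled by the hypothesis $\epsilon\mell>14\sqrt d$, makes transparent the requirement that the bounding region not be thin relative to the lattice, a point the paper's inequality $|E_0|\leq c_6\,b\,\lambda(D)$ leaves implicit.
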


Before we embark on the proof of this lemma let us first show how it implies \TheoremRef{A}:
On the one hand, it follows from Lemma \ref{lem:2ndrev}, condition \ref{item:z-iii} and the choice of $\epsilon$ that $|A\cap\KZ|>\delta\constantseven m_\ell^d/(bN)^{d-1}$.
On the other hand, \LemmaRef{Enorm} tells us that we can cover the image of $\KZ$ under $f$ with no more than $c_2\big(m_\ell\big)^d/(b^{d-1}N^d)$ lines.
It follows from the pigeonhole principle that some line in $E$ contains the image, under $f$, of at least

$$\frac{|A\cap\KZ|}{|E|}\geq\constantone
\left(\constantseven\frac{m_\ell^d}{b^{d-1}N^{d-1}}\right)\Big/
 \left(c_2\frac{m_\ell^d}{b^{d-1}N^d}\right)=
 \frac{\constantone \constantseven}{c_2}N$$
 points from $A$.
By choosing $N$ sufficiently large, depending only on $d,k,M$ and $\delta$, we deduce that some
line in $E$ must contain the image of at least $k$ points from $A$.
This contradicts the fact that $f\in\LipZ$, and this contradiction finishes the proof of \TheoremRef{A}.

Now, all that remains to show is \LemmaRef{Enorm}.
Since all lines in $E$ are parallel,
in order to count them, we can simply look at their intersection
with the hyperplane $H=\{0\}\times\R^d$.

With this in mind, for a vector $\v u=(u_1,\dots,u_{d+1})$ with $u_1\neq0$,
we define the projection $P_{\v u}:\R^{d+1}\to\R^d$ by
$$P_{\v u}(x_1,\dots,x_{d+1})=(x_2,\dots,x_{d+1})-
\tfrac{x_1}{u_1}(u_2,\dots,u_{d+1}).$$
Note that $\big(0,P_{\v u}(\v x)\big)\in\R^{d+1}$ is the intersection
of the line $\{\v x-t \v u:t\in\R\}$ with $H$.
Thus $E_0:=P_{\v s}(f(\KZ))$ is the set of intersections of lines in
$E$ with $H$, and hence $|E_0|=|E|$.

A simple calculation shows that if $\v x,\v y\in \Z^{d+1}$
are such that the first coordinate of $\v x$ and the first
coordinate of $\v y$ differ by a multiple of $b$, then
$P_{\v s}(\v x)-P_{\v s}(\v y)\in\Z^d$. This implies
\begin{equation}
\label{equation:E0lattice}
E_0\subset \bigcup_{l\in[0,b-1]}\left(\v P_{\v s}(l,0,\dots,0)+\Z^d\right).
\end{equation}

Next we want to enclose $E_0=P_{\v s}(f(\KZ))$ inside a convex
set $D\subset\R^d$.
It follows from \eqref{equation:lowerboundw1} that the operator norm of $P_{\v s}$ is smaller than a constant $\constanteight$ which only depends on $d$.
Let $\constantfunf=2\constanteight M$,  let $\v u=P_{\v s}\big(\fcircell(1)\big)\in\R^d$ and define
$$D=\{t\v u:t\in[0,1]\}+B_\R\big(0,\constantfunf\epsilon m_\ell\big).$$

To see that $E_0\subset D$, let $\v x\in E_0$ be arbitrary.
From the above construction we have $\v x=P_{\v s}\big(f(\v z)\big)$ for some $\v z\in\KZ$.
Therefore there exists some $t\in[0,1]$ such that $\|\v z-\ell(t)\|_2\leq\epsilon m_\ell$ and hence $\|f(\v z)-\fcircell(t)\|_2\leq\epsilon Mm_\ell$.
Using \ref{item:z-ii} we deduce that $\|f(\v z)-t\fcircell(1)\|_2\leq 2\epsilon Mm_\ell$.
Thus
$$\|\v x-t\v u\|_2=\Big\|P_{\v s}\big(f(\v z)\big)-P_{\v s}\big(t\fcircell(1)\big)\Big\|_2\leq2\constanteight\epsilon Mm_\ell.$$
This shows $\v x\in D$ as desired.

Putting together the inclusion $E_0\subset D$ with \EquationRef{E0lattice} we deduce that
\begin{equation}\label{equation:e0D}|E_0|\leq c_6b\lambda(D)
\end{equation} for some constant $c_6$ that only depends on $d$.
Moreover $\lambda(D)$ can be bounded by
\begin{equation}\label{equation:lambdaestimate}\lambda(D)\leq\constantnine\left(\constantfunf\epsilon m_\ell\right)^{d-1}\cdot\left(\|u\|_2+2\constantfunf\epsilon m_\ell\right)\end{equation}
where $\constantnine$ is the volume of the unit ball in $\R^{d-1}$.
Finally we need to estimate $\|\v u\|_2$.

Let
$$
\v v=\big(v_1,\dots,v_{d+1}\big)= \frac{\fcircell(1)-
\fcircell(0)}{\big\|\fcircell(1)-\fcircell(0)\big\|_2}
= \frac{\fcircell(1)}{\big\|\fcircell(1)\big\|_2}
$$
and define
$\tilde s=(a_2,\dots,a_{d+1})$ and $\tilde v=\big(v_2,\dots,v_{d+1}\big)$.
We claim that there exists a constant $\constantsieben$ which only depends on $d$ such that for all $\v x\in\R^{d+1}$ we have
\begin{equation}\label{equation:Pnorm}\left\|P_{\v s}(\v x)-P_{\v v}(\v x)\right\|_2\leq \constantsieben\epsilon\|\v x\|_2.
\end{equation}
To prove this claim, first observe that for $\v x\in\R^{d+1}$ we have
\begin{equation}
\label{equation:conclusion4}
\left\|P_{\v s}(\v x)-P_{\v v}(\v x)\right\|_2=
\left\|\frac{x_1}{v_1}\tilde v-\frac{x_1}b\tilde s\right\|_2
\leq\|\v x\|_2\left\|\frac{\tilde v}{v_1}-
\frac{\tilde s}b\right\|_2.
\end{equation}
Next, take an arbitrary $i\in\{2,3,\dots,d+1\}$; it follows from \EquationRef{conclusion3} that
\begin{equation}\label{equation:conclusionclaimbound}\left|\frac{v_i}{v_1}-\frac{a_i}b\right|\leq\epsilon+\left|\frac{v_i}{v_1}-\frac{w_i}{w_1}\right|.\end{equation}
From \ref{item:z-i} we get that $\|\v w-\v v\|_2\leq\epsilon$, and so, in particular, $|w_j-v_j|<\epsilon$ for each $j\in\{1,\dots,d+1\}$.
Recall that $\epsilon<d^{-1/2}/2$, $w_1\geq d^{-1/2}$ and $|w_1|,|w_i|\leq1$.
We deduce that
$$\left|\frac{v_i}{v_1}-\frac{w_i}{w_1}\right|=\left|\frac{v_iw_1-v_1w_i}{w_1v_1}\right|=\left|\frac{(v_i-w_i)w_1-(v_1-w_1)w_i}{w_1v_1}\right|\leq\frac{2\epsilon}{1/(2d)}=4d\epsilon.$$
Putting this together with \EquationRef{conclusionclaimbound} and \EquationRef{conclusion4} we get \EquationRef{Pnorm} and this proves the claim.

Using \EquationRef{Pnorm} with $\v x=\fcircell(1)$ and observing that $P_{\v v}(\fcircell(1))=0$ we deduce that
$$\|\v u\|_2=\|P_{\v s}\fcircell(1)\|_2\leq\constantsieben\epsilon\|\fcircell(1)\|_2\leq\constantsieben\epsilon Mm_\ell.$$
Putting this together with \EquationRef{lambdaestimate} and \EquationRef{e0D} we conclude that
$$|E|=|E_0|\leq c_6b\lambda(D)\leq c_2\frac{m_\ell^d}{b^{d-1}N^d}$$
with $c_2=c_6c_7\constantfunf^{d-1}(c_8M+2\constantfunf)$.
This finishes the proof of \LemmaRef{Enorm} and hence the proof of \TheoremRef{A}.

\bibliography{refs-joel}

\begin{thebibliography}{1}

\bibitem{Dekking79}
F.~M. Dekking.
\newblock Strongly nonrepetitive sequences and progression-free sets.
\newblock {\em J. Combin. Theory Ser. A}, 27(2):181--185, 1979.

\bibitem{Gerver_Ramsey79}
J.~L. Gerver and L.~T. Ramsey.
\newblock On certain sequences of lattice points.
\newblock {\em Pacific J. Math.}, 83(2):357--363, 1979.

\bibitem{Heinonen05}
J.~Heinonen.
\newblock {\em Lectures on {L}ipschitz analysis}, volume 100 of {\em Report.
  University of Jyv\"askyl\"a Department of Mathematics and Statistics}.
\newblock University of Jyv\"askyl\"a, Jyv\"askyl\"a, 2005.

\bibitem{Justin72}
J.~Justin.
\newblock Characterization of the repetitive commutative semigroups.
\newblock {\em J. Algebra}, 21:87--90, 1972.

\bibitem{Pomerance80}
C.~Pomerance.
\newblock Collinear subsets of lattice point sequences---an analog of
  {S}zemer\'edi's theorem.
\newblock {\em J. Combin. Theory Ser. A}, 28(2):140--149, 1980.

\bibitem{Ramsey77}
L.~T. Ramsey.
\newblock Fourier-{S}tieltjes transforms of measures with a certain continuity
  property.
\newblock {\em J. Functional Analysis}, 25(3):306--316, 1977.

\bibitem{Szemeredi75}
E.~Szemer{\'e}di.
\newblock On the sets of integers containing no $k$ elements in arithmetic
  progressions.
\newblock {\em Acta Arith.}, 27:299--345, 1975.

\bibitem{vdWaerden27}
B.L. van~der Waerden.
\newblock Beweis einer baudetschen vermutung.
\newblock {\em Nieuw. Arch. Wisk.}, 15:212--216, 1927.

\end{thebibliography}
\bibliographystyle{plain}
\end{document}